\let\csname equation*\endcsname\relax
\let\csname endequation*\endcsname\relax
\newtheorem{theorem}{Theorem}
\newtheorem{corollary}[theorem]{Corollary}
\newtheorem{remark}[theorem]{Remark}
\newtheorem{definition}[theorem]{Definition}
\newtheorem{example}[theorem]{Example}
\newcommand{\Rdown}{R_{\downarrow}}
\newcommand{\Rup}{R_{\uparrow}}
\newcommand{\mathbcal}[1]{\mathrm{#1}}
\begin{document}
\title[fg-ORKA for object tracking]{A fast and gridless ORKA algorithm for tracking moving and deforming objects} 

\author{Florian Bossmann$^{1,4}$, Jianwei Ma$^{1,2}$ and Wenze Wu$^3$}

\address{$^1$ Harbin Institute of Technology, School of Mathematics, Harbin, China.}
\address{$^2$ Peking University, School of Earth and Space Science, Beijing, China.}
\address{$^3$ Xuzhou Heavy Machinery Co., Ltd, Xuzhou, China.}
\address{$^4$ Supported by NSFC research grant 42004109.}

\begin{abstract} 
Identifying objects in given data is a task frequently encountered in many applications. Finding vehicles or persons in video data, tracking seismic waves in geophysical exploration data, or predicting a storm front movement from meteorological measurements are only some of the possible applications. In many cases, the object of interest changes its form or position from one measurement to another. For example, vehicles in a video may change its position or angle to the camera in each frame. Seismic waves can change its arrival time, frequency, or intensity depending on the sensor position. Storm fronts can change its form and position over time. This complicates the identification and tracking as the algorithm needs to deal with the changing object over the given measurements.

In a previous work, the authors presented a new algorithm to solve this problem - Object reconstruction using K-approximation (ORKA). The algorithm can solve the problem at hand but suffers from two disadvantages. On the one hand, the reconstructed object movement is bound to a grid that depends on the data resolution. On the other hand, the complexity of the algorithm increases exponentially with the resolution. We overcome both disadvantages by introducing an iterative strategy that uses a resampling method to create multiple resolutions of the data. In each iteration the resolution is increased to reconstruct more details of the object of interest. This way, we can even go beyond the original resolution by artificially upsampling the data. We give error bounds and a complexity analysis of the new method. Furthermore, we analyze its performance in several numerical experiments as well as on real data. We also give a brief introduction on the original ORKA algorithm. Knowledge of the previous work is thus not required.
\end{abstract}

\noindent{\it Keywords}: object reconstruction, multiple measurements, column shift operator, multiresolution, data resampling.

\maketitle

\section{Introduction}

In many applications, one has to extract the desired information out of the given data. Often, the crucial step in this process can essentially be described as follows: Find and track the "object" withing the data that carries the relevant information. For example, to avoid collisions and accidents, an autonomous car first needs to identify objects close to it and track their movement \cite{ChenX17}. In geophysical exploration one seeks subsurface resources using e.g., seismic measurements. Such resource reservoirs are usually identified by tracking the seismic waves reflected from the reservoir boundary \cite{Shah73}. For accurate forecasts and severe weather warnings it is necessary to track and predict the movement of storms \cite{Niemczynowicz87}. There are many more applications that face the same problem, such as medical imaging \cite{Olesen11}, industrial processing \cite{Herwig13}, and many more \cite{Yilmaz06}.

From a mathematical perspective, the problem reads as follows. Given some measurement data $d\in\mathbb{R}^{M_1}$, find $L$ objects that best fit the measurement,i.e., solve
\begin{align*}
	Ax=d && \text{where }x=\sum\limits_{k=1}^L\text{Object}_k\in\mathbb{R}^{M_2}.
\end{align*}
Here $A\in\mathbb{R}^{M_1\times M_2}$ is the measurement matrix, i.e., the linear operator that describes the measuring process. A commonly used assumption is, that the number of (relevant) objects within the data is much smaller than the data itself. In other words $L\ll M_1$ and we say that the data is sparse (in some representation). To reconstruct $x$ in the simplest case we can solve
\begin{align}\label{vector0}
	\min\limits_{x\in\mathbb{R}^{M_2}}\|Ax-d\|_2^2+\mu\|x\|_0.
\end{align} 
Here, we already assume that the data is noised and minimize over a data fidelity term instead of forcing the exact equality $Ax=d$. The $0$-norm is defined as $\|x\|_0=\#\{k\ |\ x_k\neq0\}$ and actually not a norm but abuse of notation. The parameter $\mu$ weights the sparsity promoting term against the data fidelity term. Problem (\ref{vector0}) is NP-hard and thus there are no efficient algorithms to solve it exactly \cite{Foucart13}. The most common used approaches involve replacing the $0$-norm with a $1$-norm \cite{Rudelson06} or using Greedy methods \cite{Tropp04}. In (\ref{vector0}) it is assumed that the vector $x$ itself is sparse. A more practical approach is, to assume that $x$ is sparse in some presentation, i.e., $x=\Phi y$ for some matrix $\Phi$ and $\|y\|_0$ is small. By simply replacing $A$ with $A\Phi$ in (\ref{vector0}) we can use the same algorithms to solve for $y$ and then reconstruct $x$ from $y$. The columns of the matrix $\Phi$ can be interpreted as the "objects" that we are looking for. Then a sparse $y$ with $\|y\|_0=L$ means that we found exactly $L$ objects in our data. There are mathematically motivated choices for $\Phi$ that produce sparse representations for many different kinds of data. For example, we can use a Wavelet basis \cite{Mallat99}, Fourier basis \cite{Plonka18}, or trigonometric functions \cite{Ahmed74}. We can also combine several models \cite{Aubel12} or learn the matrix directly from the data itself \cite{Tosic11}.

In problem (\ref{vector0}) we are searching for objects within one given measurement $d$. The problem we are discussing in this paper involves several measurements $d_1,\ldots,d_N\in\mathbb{R}^{M_1}$ where each measurement contains the same objects. This is known as multiple measurement problem and can be modeled as
\begin{align}\label{matrix0}
	\min\limits_{X\in\mathbb{R}^{M_2\times N}}\|AX-D\|_F^2+\mu\|X\|_{0,?}
\end{align}
Here, $D=[d_1,\ldots,d_N]\in\mathbb{R}^{M_1\times N}$ is the matrix of all measurements. At first glance problems (\ref{vector0}) and (\ref{matrix0}) seem very similar. However, as $X\in\mathbb{R}^{M_2\times N}$ is now a matrix, the definition of sparsity is not as obvious as it was for vectors. For this reason, we added a question mark to the notation in (\ref{matrix0}). Another problem arising in this multiple measurement setup is, that the objects usually change from one measurement to another. Exemplary, cars or pedestrians in a video change their position and angle to the camera. Seismic waves can change their frequency or wave form depending on the underlying material. Storms in weather data can move or change their intensity. Thus, choosing a suitable sparsity measure $\|X\|_{0,?}$ that can capture these changes is a difficult task.

Many of the discussed approaches from the single measurement case have generalizations in the multiple measurement case. Instead of using the simple $0$-norm, we can count the number of non-zero columns (row sparsity) \cite{Shukla15}, non-zero blocks (block sparsity) \cite{Eldar10}, or more general non-zero groups (group sparsity) \cite{Huang10} in $X$. Again, the problem is NP-hard and requires relaxation \cite{Tropp06_convexRelax} or the use of Greedy methods \cite{Tropp06_Greedy}. Also in the matrix case a sparsity promoting transform $\Phi$ can be used. There are several transforms specialized on two dimensional data such as Shearlets \cite{Kutyniok12}, Curvelets \cite{Ma10}, or a combination of different frames \cite{Kutyniok12_waveletShearlet}. Furthermore, we can apply dictionary learning methods again \cite{Chen17}. Another approach, which does not have an equivalent in the single measurement case, is to assume that the matrix $X$ has a small rank. This means, it only has a few non-zero singular values, i.e., its vector of eigenvalues is sparse \cite{Markovsky12}.

While the mentioned methods perform well in many applications, the sparsity models struggle catching the movement of an object. For this, more sophisticated models are required. The structural sparsity model presented in \cite{Bossmann21_StructSparse} generalizes some of the above mentioned sparsity norms for matrices and allows for more general changes of the object throughout the measurements. Alternatively, shift invariant dictionaries can be used to represent the same object independent of its position \cite{Rusu13}. Similar to this approach, the authors introduced a sparsity model that is based on a shift operator in combination with a rank-$1$ matrix \cite{Bossmann20_SR1}. This idea was later on generalized to the ORKA algorithm (Object reconstruction using $K$-approximation) \cite{Bossmann22_ORKA}. The ORKA algorithm performed well in numerical experiments and various applications. However, a big drawback is that its runtime as well as its ability to accurately track the object movement both depend on the data resolution. We overcome this drawback by introducing an iterative approach: the fast and gridless ORKA. The basic idea of this approach was first presented in a short conference publication \cite{Bossmann23_multiORKA}. In this work, we present a more general discussion and much more detailed analysis.

The remainder of this work is organized as follows. In the next section we shortly introduce the original ORKA algorithm that contains all necessary information to understand the new iterative approach. Knowledge of the previous work \cite{Bossmann22_ORKA} is thus not required. We also discuss the resolution dependency of the original approach and why this is a drawback that needs to be overcome. The third section discusses the new iterative approach. Here, we first introduce the algorithm, and afterwards perform a complexity and error analysis. The new approach is based on resampling the given data to obtain different levels of resolution. One of the resampling strategies is, to minimize the approximation error obtained in the error analysis. This resampling strategy is discussed in Section $4$. Finally, in Section $5$ we present several numerical experiments to verify the theoretical results and demonstrate the algorithm on different applications.

\section{ORKA algorithm}

In this section we will give a brief summary on the ORKA algorithm that will contain all necessary details needed to understand the extension proposed in this work. For more details we refer to the original work \cite{Bossmann22_ORKA}.

The ORKA algorithm is designed to find moving and deforming ``objects'' in data from multiple measurements. The object model used is kept quite general and thus fits to many applications. It can e.g., model seismic wavefronts in geophysical data, walking people in videos, or rainstorm clouds in weather recordings. To model the movement of such objects, the following shift operator is used.

\begin{definition}\label{def:shiftOp}
	Define the matrix $J_M\in\mathbb{R}^{M\times M}$ as
	\begin{align*}
		\bm{J}_M=\begin{pmatrix}
			0 & \cdots & 0 & 1 \\
			1 & \ddots & \vdots & 0 \\
			0 & \ddots & \ddots & \vdots \\
			\ddots & 0 & 1 & 0
		\end{pmatrix}.
	\end{align*}
	Now, for $\lambda\in\mathbb{Z}^N$ we define $S_\lambda:\mathbb{R}^{M\times N}\rightarrow\mathbb{R}^{M\times N}$ as the column shift operator, that shifts the $k$-th column of given data by $\lambda_k$, i.e., for given data $D=[D_{:1},\ldots,D_{:N}]\in\mathbb{R}^{M\times N}$ we have
	\begin{align*}
		S_\lambda(D)=\left[\bm{J}_M^{\lambda_k}D_{:k}\right]_{k=1}^N.
	\end{align*}
	Note that $\bm{J}_M^{-1}=\bm{J}_M^T$ and thus the operator is well-defined even for $\lambda_k<0$. 
\end{definition}

We can use this shift operator to model the movement of simple objects. For example, consider a seismic wave $u\in\mathbb{R}^M$ as signal over time. Assume this wave was observed at $N$ different sensors, with a different amplitude $v_k$ and arrival time $\lambda_k$ for each sensor $k=1,\ldots,N$. This data can be modeled as $S_{\lambda}(uv^T)$ where $v=[v_k]_{k=1}^N$. This model is called the shifted rank-1 matrix and was introduced by the authors in \cite{Bossmann20_SR1}. For ORKA, we will replace the rank-1 matrix $uv^T$ by another model later.

The operator of Definition \ref{def:shiftOp} can only model movement in one dimension. If the data is multi-dimensional we need to generalize the idea. For example, an object in a video can move in the two dimension captured by each frame. In this case we have three-dimensional video data $D\in\mathbb{R}^{M_1\times M_2\times N}$ and use a shift matrix $\lambda\in\mathbb{Z}^{N\times 2}$. The shift operator is then defined as
\begin{align*}
	S_\lambda(D)=\left[\bm{J}_{M_1}^{\lambda_{k,1}}D_{::k}\bm{J}_{M_2}^{-\lambda_{k,2}}\right]_{k=1}^N,
\end{align*}
where $D_{::k}$ is the $k$-th frame of the video. Generally speaking, if the recorded data of each of the $N$ given measurements is $d$ dimensional, then we will use a matrix $\lambda\in\mathbb{Z}^{N\times d}$. The entry $\lambda_{k,j}$ gives the shift of the $k$-th measurement in the $j$-th dimension. For simplicity, we will stick to the one-dimensional case throughout most parts of this work, and only discuss the higher-dimensional cases whenever there is a significant difference.

With the shift operator given in Definition \ref{def:shiftOp}, we can now introduce the object reconstruction problem. Given some data $D$, the ORKA algorithm seeks a moving and deforming object within the data by solving
\begin{align}\label{ORKAopt}
	&\min\limits_{\lambda,U}\left\|D-S_\lambda(U)\right\|_F^2+\mu\sum\limits_{k=2}^N\left\|U_{:k}-U_{:(k-1)}\right\|_2^2,\\
	&s.t.\ \ |\lambda_k-\lambda_{k-1}|\leq C\text{ for }k=2,\ldots,N.\notag
\end{align}
Here, $\|D-S_\lambda(U)\|_F^2$ is the data fidelity term and $\|U_{:k}-U_{:(k-1)}\|_2^2$ is a penalty term which measures the difference (deformation) of the object from one measurement to the next. The parameter $\mu>0$ can be used to control this deformation. The larger $\mu$ is chosen, the smaller the deformation of the object will be. Furthermore, we can set a parameter $C\in\mathbb{N}$ which limits the movement of the object by limiting the change in position of two consecutive measurements. More on the choice of this parameter later. For higher-dimensional data the norms in (\ref{ORKAopt}) will be replaced by the according Frobenius tensor/matrix norm (i.e., the square root of the sum of squares), the absolute value can be replaced by any norm that is adequate for the application to measure the object movement.

The ORKA algorithm solves problem (\ref{ORKAopt}) by applying two steps. First, we replace the data fidelity term by
\begin{align*}
	\left\|D-S_\lambda(U)\right\|_F^2=\left\|S_{-\lambda}(D-S_\lambda(U))\right\|_F^2=\left\|S_{-\lambda}(D)-U\right\|_F^2.
\end{align*}
This separates the variables $\lambda$ and $U$. Now, for a fixed $\lambda$ (\ref{ORKAopt}) becomes a quadratic, convex optimization problem in $U$. The minimum can be calculated analytically:
\begin{align}\label{optProb}
	-\left\langle A^{-1} , S_{-\lambda}(D)(S_{-\lambda}(D))^T \right\rangle.
\end{align}
Here, $A^{-1}$ is the inverse of the system matrix of the quadratic system. To reconstruct $\lambda$, the ORKA algorithm seeks the minimum of (\ref{optProb}) over $\lambda$. However, this is an integer optimization problem and too hard to be solved directly. Instead, we replace $A^{-1}$ by its $K$-bandlimited approximation $A^{-1,[K]}$ defined as
\begin{align}\label{KbandlimitedA}
	A^{-1,[K]}_{j,k}=\begin{cases}
		A_{j,k}^{-1} & |j-k|\leq K \\ 0 & \text{otherwise}
	\end{cases}.
\end{align}
We use the banded structure of the matrix to reduce the problem size and solve for $\lambda$. For the details of this step we refer again to the original work \cite{Bossmann22_ORKA}. For this work, it is enough to describe the basic concept of this step: the problem can be rewritten as a shortest path problem on a graph, which we call the $K$-approximation graph. The size of this graph grows exponentially with $K$, i.e., the complexity of this step is $O((2C+1)^{Kd})$ ($d$ is the dimensionality of the measurement data). On the other hand, the entries of the inverse system matrix $A^{-1}$ are decreasing exponentially away from the diagonal, which means that the approximation error done by ORKA decreases exponentially with $K$. Hence, $K$ needs to be chosen carefully to balance complexity and approximation error.

\subsection{Resolution dependency of ORKA}

The ORKA algorithm as described in the last section is highly dependent on the resolution of the input data in two ways. The first dependency is due to the shift $\lambda\in\mathbb{Z}^N$ being an integer vector, i.e., the shift operator from Definition \ref{def:shiftOp} can only model integer shifts. In other words, the movement of the object is bounded to the grid defined by the data resolution. Second, the parameter $C$ imposes a restriction to the object movement via the constraints in (\ref{ORKAopt}). This restriction is usually given by the application, e.g., a physical limitation on the objects movement speed. For this reason, $C$ can not be chosen to our liking, but typically scales with the resolution.

We demonstrate both dependencies in the following example. Consider a video recording of a running person. We use a static camera that shows a $100$m long street and a person running along this street from the left end of the frame towards the right end. We assume that the person needs at least $10$ seconds for this distance (which is about the current world record). Furthermore, let the video have $100$ frames per second.

First, we demonstrate the dependency of $C$ on the resolution. Here, $C$ will restrict the running speed of the person to less than $10m/s$ (i.e., at least $10$s for $100$m). This is equivalent to $0.1$m per frame. Now let the horizontal resolution of the video be $1000$ pixels. So, the $100$m long street is divided into $1000$ pixel, which means each pixel represents a length of $0.1$m. Hence, we can choose $C=1$ to restrict the movement of the person to one pixel per frame which exactly aligns with the desired maximum speed of $10m/s$. However, if we increase the video resolution to $10,000$ pixels, each pixel will only represent a $0.01$m long part of the road. To model a maximum speed of $0.1$m per frame we now need to set $C=10$. Remember that the complexity of ORKA scales exponentially with base $2C+1$ and thus a small increase in $C$ can have enormous effects on the performance.

Next, let us have a look at the resolution dependency of $\lambda\in\mathbb{Z}^N$. Assume the person actually runs much slower at a speed of only $5m/s$ ($0.05$m per frame). As seen above, a horizontal resolution of $1000$ pixels is equivalent to $0.1$m per pixel. This resolution is too low to catch the persons movement in each frame as we would need to set $\lambda_k=0.5\not\in\mathbb{Z}$. Here, a resolution of at least $2000$ pixels is required to obtain a pixel length of $0.05$m or less. If the person is not running at a constant speed, then even higher resolutions will be required to catch all details of the movement.

In summary, to track the object movement in detail, ORKA requires high resolution data since the movement vector $\lambda$ is bounded to the grid. However, this will also increase the parameter $C$ and thus the complexity. Hence, the data resolution has to be chosen carefully to balance accuracy and complexity. We will present an iterative ORKA approach in the next section that overcomes these problems - the fast and gridless ORKA algorithm (fg-ORKA).

\section{fast gridless ORKA algorithm}

The idea of an iterative ORKA approach was first presented by the authors in a short conference work \cite{Bossmann23_multiORKA}. In that work, we combined ORKA with a wavelet multiresolution analysis to obtain a fast iterative version. The wavelet transform was used to down- or upsample the data by a factor of $2$ and acquire any desired resolution this way. In the here presented work, we generalize the concept to other techniques that allow a down- or upsampling factor different than $2$, which can decrease the runtime even more as we will later see. Furthermore, we will provide a detailed runtime and error analysis for the different methods and provide advise which method to best use depending on the application.

\subsection{Algorithm concept}

The general idea of fg-ORKA is as follows. We start at a low resolution version of the given data where the parameter $C$ is small, hence keeping the complexity low. The obtained movement $\lambda\in\mathbb{Z}^N$ will then be used as approximation for the next higher resolution. This means, for the next higher resolution we do not need to compute the complete movement, but just an update of the low resolution version. We will show that this update can be calculated by using the ORKA algorithm again with a small parameter $C$. This process is repeated until we reach the desired resolution, which can be higher than the original data if we artificially upsample it. This way, we can achieve any desired accuracy on the movement $\lambda$ and are no longer bound to the grid given by the original data resolution.

To resample the data into different resolutions, we need suitable resampling methods with certain properties:

\begin{definition}\label{def:resamplingPair}
	Let $r,M\in\mathbb{N}$ where $r\geq2$ divides $M$. We say that $\Rdown:\mathbb{R}^M\rightarrow\mathbb{R}^{M/r}$ and $\Rup:\mathbb{R}^{M/r}\rightarrow\mathbb{R}^M$ are a suitable resampling pair if the following two conditions hold:
	\begin{itemize}
		\item Both $\Rup$ and $\Rdown$ are linear.
		\item Both operators are $r$-shift invariant in the following sense: For any $x\in\mathbb{R}^M$, $y\in\mathbb{R}^{M/r}$ and $\lambda\in\mathbb{Z}$ we have
		\begin{align*}
			S_\lambda(\Rdown(x))=\Rdown(S_{r\lambda}(x)) && \text{and} && S_{r\lambda}(\Rup(y))=\Rup(S_\lambda(y)).
		\end{align*}
		\item $\Rup\circ\Rdown:\mathbb{R}^M\rightarrow\mathbb{R}^M$ is an orthogonal projection.
		\item $\Rup$ is angle preserving, i.e., for $x,y\in\mathbb{R}^{M/r}$ we have
		\begin{align*}
			\langle x,y\rangle = \langle\Rup(x),\Rup(y)\rangle,
		\end{align*}
		where $\langle\cdot,\cdot\rangle$ is the Euclidean inner product.
	\end{itemize}
	We call $r$ the resampling factor.
\end{definition}

From the listed properties, actually only the $r$-shift invariance is required for the algorithm. We restrict ourselves to linear operators as this will be the common case in applications and the linearity massively simplifies the theory. Last, if the operators do not satisfy the orthogonal projection or angle preserving property, the error bounds achieved later on will be worse. For any operator pair $(\Rup,\Rdown)$ fulfilling Definition \ref{def:resamplingPair} there is an easy representation using a matrix.

\begin{corollary}\label{samplingMatrix}
	Let $r$, $M$, $\Rup$, and $\Rdown$ be as in Definition \ref{def:resamplingPair}. Then there exists an $\rho\in\mathbb{R}^M$ such that
	\begin{align*}
		R=\left(\rho_{j-rk\mod M}\right)_{j,k=0}^{M,\frac{M}{r}}\in\mathbb{R}^{M\times\frac{M}{r}}, &&
		R^TR=\mathbcal{I}_{\frac{M}{r}}\\
		\Rup(x)=Rx, &&
		\Rdown(x)=R^Tx.
	\end{align*}
	where $\mathbcal{I}_{\frac{M}{r}}$ is the identity matrix of size $\frac{M}{r}\times\frac{M}{r}$.
\end{corollary}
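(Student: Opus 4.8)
The plan is to extract the matrix $R$ directly from the $r$-shift invariance of $\Rup$ and then read off all remaining claims from the three other properties in Definition \ref{def:resamplingPair}. First I would set $R$ to be the matrix of the linear operator $\Rup$ with respect to the standard bases, so that $\Rup(x)=Rx$ is immediate from linearity; write $R_{:k}$ for its $k$-th column, i.e.\ $R_{:k}=\Rup(e_k)$ where $e_k$ is the $k$-th standard basis vector of $\mathbb{R}^{M/r}$. The key observation is that the standard basis vectors of $\mathbb{R}^{M/r}$ are cyclic shifts of one another, $e_k=\bm{J}_{M/r}^{k}e_0=S_k(e_0)$ (interpreting $S$ on a single column as multiplication by the appropriate power of $\bm{J}$, and indices taken mod $M/r$). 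Applying the $r$-shift invariance $S_{rk}(\Rup(y))=\Rup(S_k(y))$ with $y=e_0$ gives $R_{:k}=\Rup(e_k)=\Rup(S_k(e_0))=S_{rk}(\Rup(e_0))=\bm{J}_M^{rk}R_{:0}$. Setting $\rho:=R_{:0}=\Rup(e_0)\in\mathbb{R}^M$, the entries of the $k$-th column are therefore cyclic shifts of $\rho$ by $rk$ positions, which is exactly $R_{j,k}=\rho_{j-rk\bmod M}$. This establishes the claimed form of $R$.

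Next I would handle $\Rdown$. Because $\Rup$ is angle preserving, $\langle Rx,Ry\rangle=\langle x,y\rangle$ for all $x,y\in\mathbb{R}^{M/r}$, i.e.\ $\langle R^TRx,y\rangle=\langle x,y\rangle$, so $R^TR=\mathcal{I}_{M/r}$; this is one of the asserted identities. It remains to identify $\Rdown$ with $R^T$. Here I would use the orthogonal-projection hypothesis: $P:=\Rup\circ\Rdown$ is an orthogonal projection on $\mathbb{R}^M$, hence self-adjoint, $P=P^T$. In matrix terms $P=R\,B$ where $B\in\mathbb{R}^{(M/r)\times M}$ is the matrix of $\Rdown$ (which exists and is unique by linearity of $\Rdown$). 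From $R^TR=\mathcal{I}$ and $P=RB$ we get $B=R^TRB=R^TP=R^TP^T=(PR)^T=(RBR)^T$; to close the loop, note $BR$ is an $(M/r)\times(M/r)$ matrix and $R(BR)=(RB)R=PR=P^TR^T{}^{\!\!T}\dots$ — more cleanly, since $P$ is the orthogonal projection onto $\operatorname{range}(R)$ and $R$ has orthonormal columns, the projection is $P=RR^T$; comparing $RB=RR^T$ and left-multiplying by $R^T$ (using $R^TR=\mathcal{I}$) yields $B=R^T$, i.e.\ $\Rdown(x)=R^Tx$.

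The one genuinely delicate point — and the step I expect to need the most care — is the last one: inferring $\Rdown=R^T$ rather than merely $\Rup\circ\Rdown=R R^T$. A priori $\Rdown$ could differ from $R^T$ by something annihilated by $\Rup$, but $\Rup$ is injective (it has a left inverse, namely $R^T$, since $R^TR=\mathcal{I}$), so $\Rup\circ(\Rdown-R^T)=0$ forces $\Rdown=R^T$. Equivalently, one observes that an orthogonal projection $P$ with $\operatorname{range}(P)\supseteq\operatorname{range}(R)$ and $\operatorname{rank}(P)=\operatorname{rank}(R)=M/r$ must equal $RR^T$, and then injectivity of $R$ cancels the left factor. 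I would also remark (as a consistency check, not strictly needed) that the $r$-shift invariance of $\Rdown$, $S_\lambda(\Rdown(x))=\Rdown(S_{r\lambda}(x))$, is automatically compatible with $\Rdown=R^T$ because $\bm{J}_{M/r}^\lambda R^T = R^T\bm{J}_M^{r\lambda}$ follows by transposing the column-shift identity $\bm{J}_M^{r\lambda}R=R\bm{J}_{M/r}^\lambda$ already derived above, using $\bm{J}_M^{-1}=\bm{J}_M^T$.
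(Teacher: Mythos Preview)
Your proposal is correct and follows essentially the same route as the paper: linearity gives the matrix $R$, angle preservation gives $R^TR=\mathbcal{I}_{M/r}$, the orthogonal-projection hypothesis together with injectivity of $R$ forces $\Rdown=R^T$ (the paper phrases this via the Moore--Penrose inverse and ``uniqueness of the projection'', which is the same cancellation), and $r$-shift invariance yields the cyclic column structure $R_{j,k}=\rho_{j-rk\bmod M}$. The only cosmetic differences are that you derive the column structure first by applying shift invariance to the basis vectors $e_k=S_k(e_0)$ rather than last via the matrix identity $\bm{J}_M^{r\lambda}R=R\bm{J}_{M/r}^{\lambda}$, and you make the left-cancellation of $R$ explicit where the paper leaves it implicit.
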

\begin{proof}
	Since $\Rup$ is linear, there exists a matrix such that $\Rup(x)=Rx$ for all $x\in\mathbb{R}^{\frac{M}{r}}$. From the angle preserving property we get
	$$x^Ty=\langle x,y\rangle=\langle\Rup(x),\Rup(y)\rangle=x^TR^TRy,$$
	for all $x,y\in\mathbb{R}^{\frac{M}{r}}$. This can only hold if $R^TR=\mathbcal{I}_{\frac{M}{r}}$. This also means that the columns for $R$ form an orthogonal basis of some subspace of $\mathbb{R}^M$. $\Rup\circ\Rdown$ is the orthogonal projection onto this subspace, which can also be written in forms of the Moore-Penrose inverse, i.e., $\Rup\circ\Rdown=RR^+$. Because the orthogonal projection is unique, we obtain $$\Rdown(x)=R^+x=(R^TR)^{-1}R^Tx=R^Tx.$$
	Last, we use the $r$-shift invariance of our operators. Therefore, we rewrite the equation in terms of matrix multiplications using Definition \ref{def:shiftOp}:
	\begin{align*}
		S_\lambda(\Rdown(x))=\Rdown(S_{r\lambda}(x)) && \Leftrightarrow && \bm{J}_{\frac{M}{r}}^\lambda R^T x=R^T\bm{J}_M^{r\lambda}x \\
		S_{r\lambda}(\Rup(x))=\Rup(S_{\lambda}(x)) && \Leftrightarrow && \bm{J}_M^{r\lambda} R x=R\bm{J}_{\frac{M}{r}}^{\lambda}x \\
	\end{align*}
	As this needs to hold for all $x$ and $\lambda$, we require $\bm{J}_M^{r\lambda} R=R\bm{J}_{\frac{M}{r}}^{\lambda}$. (Remember that $\bm{J}_M^{-\lambda}=(\bm{J}_M^\lambda)^T$ and thus the shift invariance of the downsampling operator also follows from this requirement.) For $j=0,\ldots,M-1$ and $k=0,\ldots,\frac{M}{r}-1$ we get
	\begin{align*}
		\left(\bm{J}_M^{r\lambda} R\right)_{j,k}=\left(R\bm{J}_{\frac{M}{r}}^{\lambda}\right)_{j,k} &&
		\Leftrightarrow &&
		R_{j-r\lambda\mod M,k}=R_{j,k+\lambda\mod\frac{M}{r}}.
	\end{align*}
	Now choose $k=0$ and $\lambda=1,\ldots,\frac{M}{r}-1$ and we see that each column of the matrix is just a shifted version of the first column, i.e., we can set $\rho_j=R_{j,0}$ for $j=0,\ldots,M-1$.
\end{proof}

We will use the matrix $R$, the vector $\rho$, and the operators $\Rup$, $\Rdown$ interchangeably throughout this work as they all represent the same resampling methods. In this work we focus on three different strategies. The first two are straightforward and given in the following two examples. The third resampling strategy tries to minimize the approximation error of our method. It is presented after the error analysis of fg-ORKA was discussed.

\begin{example}
	Let $M$ be divisible by $2$. The periodic discrete wavelet transform divides a given signal $x\in\mathbb{R}^M$ into its low-pass $x_{\text{low}}\in\mathbb{R}^{M/2}$ and high-pass coefficients $x_{\text{high}}\in\mathbb{R}^{M/2}$. The high-pass coefficients contain the details of the signal while the low-pass coefficients can be seen as a low-resolution approximation. We use these to define our downsampling operator $\Rdown(x)=x_{\text{low}}$. This is equivalent to setting $\rho$ to the scaling coefficients associated with the Wavelet. To get the angle preserving property we need to choose an orthogonal Wavelet such as the Daubechies wavelet family. The upsampling operator $\Rup$ will in this case perform an inverse Wavelet transform where the data is used as low-pass coefficients and the high-pass coefficients are assumed to be $0$.
\end{example}

The above example was the strategy used in our first work \cite{Bossmann23_multiORKA}. The drawback on wavelet based resampling is, that the sampling rate is fixed to $r=2$. We show in our later analysis that this is not always desirable. Thus, we also provide the following simple strategy that works with any resampling factor.

\begin{example}
	Let $F_M\in\mathbb{C}^{M\times M}$ be the discrete normalized Fourier matrix defined as
	\begin{align*}
		F_M=\frac{1}{\sqrt{M}}\left(
			e^{\frac{-2\pi ijk}{N}}
		\right)_{j,k=0}^{M-1}.
	\end{align*} 
	The Fourier transform of a vector $x\in\mathbb{C}^M$ is defined as $\hat{x}=F_Mx$. The matrix is orthogonal and thus the inverse transform is given as $F_M^*\hat{x}=x$. Furthermore, for real vectors $x\in\mathbb{R}^M$ we have $\hat{x}_j=\overline{\hat{x}_{M-j}}$ for $j=1,\ldots,M-1$. For any $r$ that divides $M$, we can define a Fourier based downsampling operator as
	\begin{align*}
		\Rdown(x)=F_{\frac{M}{r}}^*PF_Mx
	\end{align*}
	where $P\in\mathbb{R}^{(M/r)\times M}$ is a downsampling matrix of the form
	\begin{align*}
		P&=
		\begin{pmatrix}
			\mathbcal{I}_{\lceil M/(2r)\rceil} & \bm{0} & \bm{0} \\ \bm{0} & \bm{0} & \mathbcal{I}_{\lfloor M/(2r)\rfloor}
		\end{pmatrix}&\text{ for }\frac{M}{r}\text{ odd},\\
		P&=
		\begin{pmatrix}
			\mathbcal{I}_{M/(2r)} & \bm{0} & \bm{0} & \bm{0} & \bm{0} \\ \bm{0} & \frac{1}{\sqrt{2}} & \bm{0} &  \frac{1}{\sqrt{2}} & \bm{0} \\ \bm{0} & \bm{0} & \bm{0} & \bm{0} & \mathbcal{I}_{M/(2r)-1}
		\end{pmatrix}&\text{ for }\frac{M}{r}\text{ even},
	\end{align*}
	i.e., we remove the high-frequency coefficients in the middle of the data. The downsampling operator $\Rdown$ applies a length $M$ Fourier transform, followed by a low-pass filtering, and last a length $\frac{M}{r}$ inverse Fourier transform. We have $\Rdown(x):\mathbb{R}^M\rightarrow\mathbb{R}^{\frac{M}{r}}$ and thus $F_{\frac{M}{r}}PF_M\in\mathbb{R}^{\frac{M}{r}\times M}$. It follows that 
	\begin{align*}
		(F_{\frac{M}{r}}PF_M)(F_{\frac{M}{r}}PF_M)^T=(F_{\frac{M}{r}}PF_M)(F_{\frac{M}{r}}PF_M)^*=\mathbcal{I}_{\frac{M}{r}}.
	\end{align*}
	Furthermore, the $r$-shift invariance is a direct consequence of the Fourier shift theorem. Hence, $\Rdown$ and $\Rup$ are suitable resampling operators following from Corollary \ref{samplingMatrix}.
\end{example}

Let us now discuss the fg-ORKA algorithm in detail. Consider the following setup. For given data $D\in\mathbb{R}^{M\times N}$ and $C>0$ we seek to find the optimal movement vector $\lambda^{\text{opt}}\in\mathbb{Z}^N$ as solution of problem (\ref{ORKAopt}). The fg-ORKA approach assumes that, if we downsample the data by a factor of $r$, the optimal path $\lambda^{\text{opt},1}$ of the downsampled data $\Rdown(D)$ is an approximation of the original optimal path. In concrete, we assume that the relative distances which appear in the constraint of problem (\ref{ORKAopt}) are preserved as best as possible:
\begin{align}\label{distancePreserving}
	\left|\lambda^{\text{opt},1}_k-\lambda^{\text{opt},1}_{k-1}\right|=\text{round}\left(\frac{\left|\lambda^{\text{opt}}_k-\lambda^{\text{opt}}_{k-1}\right|}{r}\right), && k=2,\ldots,N.
\end{align}
Using the $r$-shift invariance, we can upscale $\lambda^{\text{opt},1}$ and write $\lambda^{\text{opt}}=r\lambda^{\text{opt},1}+\lambda^{\text{diff}}$ where $\lambda^{\text{diff}}\in\mathbb{Z}^N$ is the difference due to the rounding effect in (\ref{distancePreserving}). We get
\begin{align*}
	\text{round}\left(\frac{\left|\lambda^{\text{opt}}_k-\lambda^{\text{opt}}_{k-1}\right|}{r}\right)
	&=\text{round}\left(\left|\lambda^{\text{opt},1}_k-\lambda^{\text{opt},1}_{k-1}+\frac{\lambda^{\text{diff}}_k-\lambda^{\text{diff}}_{k-1}}{r}\right|\right)
\end{align*}
Since $\lambda^{\text{opt},1}_k-\lambda^{\text{opt},1}_{k-1}\in\mathbb{Z}$ we get with (\ref{distancePreserving}) that
\begin{align}\label{updateConstant}
	\left|\frac{\lambda^{\text{diff}}_k-\lambda^{\text{diff}}_{k-1}}{r}\right|\leq\frac{1}{2}
	&&\Leftrightarrow&&
	\left|\lambda^{\text{diff}}_k-\lambda^{\text{diff}}_{k-1}\right|\leq\frac{r}{2}.
\end{align}
Last, note that $S_{\lambda^{\text{opt}}}(D)=S_{\lambda^{\text{diff}}}(S_{r\lambda^{\text{opt},1}}(D))$. This inspires the following strategy: First, use the lower resolution data $\Rdown(D)$ to reconstruct $\lambda^{\text{opt},1}$ with the ORKA algorithm. Second, use the ORKA algorithm again on the pre-shifted data $S_{r\lambda^{\text{opt},1}}(D)$ with $C=\frac{r}{2}$ to obtain $\lambda^{\text{diff}}$. This strategy can be repeated to recover $\lambda^{\text{opt},1}=r\lambda^{\text{opt},2}+\lambda^{\text{diff},1}$ in two steps. Note that with each further downsampling step added, the constant $C$ required for the initial ORKA algorithm is also divided by $r$ and thus the complexity is reduced.

Furthermore, we can apply a similar strategy in the other direction. Assume we have reconstructed the path vector $\lambda^{\text{opt}}$. Set $\lambda^{\text{opt},0}=\lambda^{\text{opt}}$. We can now calculate the paths $\lambda^{\text{opt},-1}=r\lambda^{\text{opt},0}+\lambda^{\text{diff},-1}$ for the artificially upsampled data $\Rup(D)$ in the same manner. While the artificial upsampling does not add any new information about the data, it overcomes the grid dependency of our path. We can use the new optimal path $\lambda^{\text{opt}}=\frac{\lambda^{\text{opt},-1}}{r}\in\left\{\frac{k}{r}\ \middle|\ k\in\mathbb{Z}\right\}^N$ scaled to the original resolution of $D$ that is no longer bounded to the grid. Again, this idea can be applied several times to achieve a more detailed path vector.

Let $L\in\mathbb{N}$ be the number of downsamples performed in our strategy and $J\in\mathbb{N}$ the number of artificial upsamples. We can write the optimal path vector as
\begin{align}\label{lambdaOptRepresent}
	\lambda^{\text{opt}}=r^L\lambda^{\text{opt},L}+\sum\limits_{j=-J}^{L-1}r^j\lambda^{\text{diff},j}
	\in\left\{\frac{k}{r^J}\ \middle|\ k\in\mathbb{Z}\right\}^N
	&&
	\text{with}
	&&
	|\lambda^{\text{diff},j}_k-\lambda^{\text{diff},j}_{k-1}|\leq\left\lfloor\frac{r}{2}\right\rfloor,
\end{align}
where we can apply the floor operator on $\frac{r}{2}$ since $\lambda^{\text{diff},j}\in\mathbb{Z}$. To balance the complexity of all ORKA calls, we also require $|\lambda^{\text{opt},L}_k-\lambda^{\text{opt},L}_{k-1}|\leq\left\lfloor\frac{r}{2}\right\rfloor$. Note that problem (\ref{ORKAopt}) requires $|\lambda^{\text{opt}}_k-\lambda^{\text{opt}}_{k-1}|\leq C$ and we want this bound to be tight in order to not restrict the number of possible paths further. This determines $L$ by
\begin{align}
	|\lambda^{\text{opt}}_k-\lambda^{\text{opt}}_{k-1}|
	\leq
	r^L\left\lfloor\frac{r}{2}\right\rfloor+\sum\limits_{j=-J}^{L-1}r^j\left\lfloor\frac{r}{2}\right\rfloor
	=
	\left\lfloor\frac{r}{2}\right\rfloor\frac{r^{L+1}-r^{-J}}{r-1}
	\stackrel{!}{=}C\label{LCconnection}\\
	\Rightarrow
	L=\log_r\left(\frac{C(r-1)}{\left\lfloor\frac{r}{2}\right\rfloor}+r^{-J}\right)-1=\begin{cases}
		\log_r\left(2C+r^{-J}\right)-1 & ,r\text{ odd}\\
		\log_r\left(2C(1-r^{-1})+r^{-J}\right)-1 & ,r\text{ even}
	\end{cases},\label{chooseL}
\end{align}
where we round the result either up or down to the next natural number. (Rounding up results in the constraint of problem (\ref{ORKAopt}) being slightly violated in the extreme cases, while rounding down will restrict our object movement slightly more than intended.) Altogether, we obtain the the fg-ORKA algorithm as shown in Algorithm \ref{alg:fg-ORKA}.
\begin{algorithm}[h]
	\caption{fg-ORKA}
	\label{alg:fg-ORKA}
	\DontPrintSemicolon
	
	\SetKwInput{Input}{Input}
	\SetKwInOut{Output}{Output}
	
	\Input{data $D\in\mathbb{R}^{M\times N}$, ORKA parameters $C,\mu\in\mathbb{R}_+$,\newline resampling rate $r$, resampling pair $(\Rdown,\Rup)$,\newline number of upsamplings $J\in\mathbb{N}$.}
	\tcc{data resampling}
	Calculate $L$ according to (\ref{chooseL}) and set $D^0=D$\;
	\lFor{$k=1,\ldots,L$}{Calculate $D^k=\Rdown(D^{k-1})$}
	\lFor{k=-1,\ldots,-J}{Calculate $D^k=\Rup(D^{k+1})$}
	\tcc{iterative ORKA calls}
	Calculate $\lambda^{\text{opt},L}$ using ORKA (on $D^L$ with $C=\left\lfloor\frac{r}{2}\right\rfloor$)\;
	\For{k=L-1,\ldots,-J+1}{
		Calculate $\lambda^{\text{diff},k}$ using ORKA (on $S_{r\lambda^{\text{opt},{k+1}}}(D^k)$ with $C=\left\lfloor\frac{r}{2}\right\rfloor$)\;
		Calculate $\lambda^{\text{opt},k}=r\lambda^{\text{opt},{k+1}}+\lambda^{\text{diff},k}$\;
	}
	Calculate $\lambda^{\text{diff},{-J}}$ and $U$ using ORKA (on $S_{r\lambda^{\text{opt},{-J+1}}}(D^{-J})$ with $C=\left\lfloor\frac{r}{2}\right\rfloor$)\;
	Calculate $\lambda^{\text{opt},{-J}}=r\lambda^{\text{opt},{-J+1}}+\lambda^{\text{diff},{-J}}$\;
	Revert pre-shift $U\leftarrow S_{-r\lambda^{\text{opt},{-J+1}}}(U)$\;
	\Output{$\lambda^{\text{opt}}=\frac{\lambda^{\text{opt},{-J}}}{r^J}$, $U\in\mathbb{R}^{r^JM\times N}$}
\end{algorithm}

\begin{remark}
	There are a few details on Algorithm \ref{alg:fg-ORKA} that we want to point out. First, because of the downsampling strategy, $M$ needs to be divisible by $r^L$. This can be achieved by zero-padding the data if required. Second, the original ORKA algorithm is used in lines 4 and 6 only to obtain the path update, i.e., only the first step of ORKA is actually required and we do not need to solve for $U$. Only in the last step (line 8) we also return the object matrix $U$. Note that because of the pre-shift applied to the data, we need to shift $U$ back into the position that corresponds to an unshifted $D^{-J}$ (line 10). Also, note that $U\in\mathbb{R}^{r^JM\times N}$ has a much finer resolution than the original data. Last, for multidimensional data $D\in\mathbb{R}^{M_1\times\ldots\times M_m\times N}$ the algorithm can be applied in the same way as long as suitable resampling functions $\Rdown:\mathbb{R}^{M_1\times\ldots\times M_m}\rightarrow\mathbb{R}^{M_1/r\times\ldots\times M_m/r}$ and $\Rup:\mathbb{R}^{M_1/r\times\ldots\times M_m/r}\rightarrow\mathbb{R}^{M_1\times\ldots\times M_m}$ are given.
\end{remark}

In the next two subsections we analyze the complexity of the algorithm as well as the approximation error compared to the original ORKA algorithm. Afterwards, we present a third downsampling strategy that is based on the idea of minimizing the approximation error.

\subsection{Complexity analysis}

For our complexity analysis we concentrate on the path reconstruction using the first step of the ORKA algorithm, i.e., lines 4, 6, and 8 of Algorithm \ref{alg:fg-ORKA}. This is the part of our algorithm that scales exponentially. Other steps, such as the resampling of the data (lines 2,3), only have a very minor effect on the complexity that is negligible in comparison. As a reminder, the complexity in both runtime and memory usage of the original ORKA algorithm is $O((2C+1)^{Kd})$ where $K$ is the approximation parameter and $d$ is the dimensionality of one measurement. Since fg-ORKA perform $L+J+1$ calls to the original algorithm with $C=\left\lfloor\frac{r}{2}\right\rfloor$, it directly follows that fg-ORKA has a space complexity of
\begin{align*}
	O\left((2\left\lfloor\frac{r}{2}\right\rfloor+1)^{Kd}\right),
\end{align*}
which is the space complexity of the ORKA algorithm in each iteration. Remember that $K$ is the number of bands used in our bandlimited approximation matrix (\ref{KbandlimitedA}), i.e., the larger we choose $K$ the smaller the approximation error gets. For this reason, the usual parameter strategy is, to choose $r=2,3$ and then set $K$ as large as possible until the available memory is exhausted. Nevertheless, we also want to analyze how the runtime complexity of fg-ORKA is effected by the choice of $r$ and $K$. The runtime complexity of fg-ORKA is the runtime complexity of ORKA multiplied with the number of iterations performed, i.e.,
\begin{align}\label{complexity}
	O\left((L+J+1)(2\left\lfloor\frac{r}{2}\right\rfloor+1)^{Kd}\right).
\end{align}
For large $C$ this can be much more efficient than the original algorithm since the base of the exponential can be reduced drastically, i.e., $\left\lfloor\frac{r}{2}\right\rfloor\ll C$. The new method only scales linear in $L$ and $J$, but there are some details that we need to take into account. First, the number $J$ of upsamplings performed also influences the final data size (see output size of $U$ in Algorithm \ref{alg:fg-ORKA}). This will increase the complexity of several other steps of the algorithm, e.g., the convex optimization done to recover $U$. This needs to be considered whenever using a large parameter $J$. However, as we will see in the experiments later on, the benefit of artificially upscaling the data diminishes after a few steps and thus $J$ is typically quite small. Also, note that the number of ORKA calls in fg-ORKA using the parameters $C$ and $J$ is the same as using the parameters $\tilde{C}=r^JC$ and $\tilde{J}=0$, i.e., the complexity does not change if we upsample the data $J$ times beforehand and then set $J=0$. Without loss of generality, we will use $J=0$ in the remaining analysis.

We compare the complexity of fg-ORKA for different resampling factors $r$. Remember that due to (\ref{chooseL}) $L$ depends on $r$ and $C$. Intuitively, when the resampling rate $r$ increases the number of required iterations $L$ should go down. This can best be seen from the sum formula in (\ref{LCconnection}). The expression increases for increasing $r\geq2$, which means $L$ has to be reduced to fit the target value $C$. Since $\left\lfloor\frac{2s}{2}\right\rfloor=\left\lfloor\frac{2s+1}{2}\right\rfloor$, we can directly follow that for even resampling rates $r=2s$ the next higher odd resampling rate $r=2s+1$ has the same or a lower runtime complexity.

Next, we compare the complexity for resampling rates $r$ and $r+2$. For odd $r$ combining (\ref{chooseL}) and (\ref{complexity}) we get a complexity of
\begin{align*}
	O_r=O\left(\text{round}\left(\log_r(2C+1)\right)r^{Kd}\right).
\end{align*}
Note that we need to round $L$ to an integer value as it is the number of iterations performed. Without the rounding operation, we can calculate the derivative to see that the complexity increases for $r\geq3$ and thus $r=3$ should be the optimal (odd) choice. To prove this in more detail, we consider the rate $\frac{O_3}{O_r}$ between the complexity for $r=3$ and any other odd $r\neq3$. Therefore, let $j,k\in\mathbb{N}$ be chosen such that
\begin{align}
	&&&\begin{matrix}j-0.5\leq \log_3(2C+1)< j+0.5 \\ k-0.5\leq \log_r(2C+1) < k+0.5\end{matrix},\notag\\
	\Leftrightarrow&&&
	\begin{matrix}\log3(j-0.5)\leq \log(2C+1)<\log3(j+0.5) \\ \log r(k-0.5)\leq \log(2C+1) <\log r(k+0.5)\end{matrix}.\label{roundingjk}
\end{align}
Note that resampling rates $r>2C+1$ are not feasible, as this increases the complexity compared to the original ORKA algorithm. Thus we assume $r\leq2C+1$ and get $j,k\geq1$. Now, combining both inequalities in (\ref{roundingjk}), we get
\begin{align*}
	\log 3(j-0.5)<\log r(k+0.5) && \Leftrightarrow && j<\frac{\log r}{\log3}(k+0.5)+0.5.
\end{align*} 
From this we obtain
\begin{align*}
	\frac{O_3}{O_r}=\frac{j3^{Kd}}{kr^{Kd}}<\frac{\frac{\log r}{\log3}(k+0.5)+0.5}{k}\left(\frac{3}{r}\right)^{Kd}
	\leq\left(\frac{\log r}{\log 3}+\frac{\log r}{2\log 3}+\frac{1}{2}\right)\left(\frac{3}{r}\right)^{Kd}.
\end{align*}
Next, note that $\frac{\log r}{r}$ decreases for $\log r\geq1$ and hence the maximum for all odd $r\neq3$ is reached at $r=5$. We use that to obtain the bound
\begin{align*}
	\frac{O_3}{O_r}<\left(\frac{3\log 5}{5\log 3}+\frac{3\log 5}{10\log 3}+\frac{3}{10}\right)\left(\frac{3}{r}\right)^{Kd-1}<1.62\left(\frac{3}{r}\right)^{Kd-1}.
\end{align*}
Since $1.62<\frac{5}{3}$ this bound is smaller $1$ for all odd $r\geq5$ and all $Kd\geq2$, i.e., the runtime complexity increases. (The case where $Kd=1$ is not relevant in applications since this parameter choice is not recommended anyway.) In the same way it can be shown that the runtime complexity for even $r$ increases with $r$ for $Kd\geq2$. Altogether, we obtain that $r=3$ is the most efficient resampling rate.

\subsection{Error analysis}

Before we go deeper into the error analysis, we want to clarify some things about the method and its approximation error. First, ORKA and fg-ORKA are no approximation algorithms but designed to track objects within the data. This means, the actual approximation error with respect to the original data $\|D-S_\lambda(U)\|_F^2$ is not relevant to measure the quality of the algorithm. Indeed, we can easily achieve an approximation error of $0$ by choosing $\mu=0$ in (\ref{ORKAopt}). Instead, we analyze how well ORKA and fg-ORKA recover the optimal value given in (\ref{optProb}), i.e., how optimal the reconstructed movement is. For the original ORKA algorithm this error is $O((N-K)^2e^{(N-K)^2})$ \cite{Bossmann22_ORKA} where $N$ is the number of measurements given.

Furthermore, we remind the reader that ORKA is a two-step method. In the first step the movement vector $\lambda$ is reconstructed using a $K$-approximation. The second step calculates the corresponding object matrix $U$. It is important to note, that the second step does not use any approximation but actually solves the exact problem (for the fixed movement $\lambda$). Hence, as long as the optimal path is reconstructed in the first step, the approximation error of ORKA and fg-ORKA will be $0$. Unfortunately, we cannot give an exact analysis on when the optimal path is reconstructed and when not. Instead, the error bounds we give will show which factors play a role in the success or failure of the first step. The approximation errors observed in practice normally tend to stay very small until the path reconstruction fails at which point the error drastically increases.

\begin{remark}\label{rmk:numbersBaseRelation}
	The proposed algorithm reconstructs the movement vector $\lambda$ in several iterations using the summation (\ref{lambdaOptRepresent}). This formula is closely related to the representation of numbers within the base $r$. Instead of having a representation using digits $0,\ldots,r-1$, formula (\ref{lambdaOptRepresent}) uses the digits $-\left\lfloor\frac{r}{2}\right\rfloor,\ldots,\left\lfloor\frac{r}{2}\right\rfloor$. For odd $r$ this is actually a valid numeral system, e.g., for $r=3$ this is called the balanced ternary. It follows, that there is a unique representation for each number using exactly $L+J+1$ digits (allowing leading zeros), i.e., for any optimal path $\lambda^{\text{opt}}$ there is only one possible choice of $\lambda^{\text{opt},L}$ and $\lambda^{\text{diff},j}$. In other words, once fg-ORKA fails reconstructing the correct path in one of its iterations, this error can not be undone in the following steps. If $r$ is even instead, the representation is no longer unique. For example, $5=1\times1+0\times2+1\times4=-1\times1+1\times2+1\times4$ has two different representations for $r=2$. This means, for even $r$, fg-ORKA has the chance of correcting an error in later iterations. For this reason, we consider $r=2$ the more stable resampling rate while $r=3$ is the more efficient one.
\end{remark}

To understand the approximation error done by fg-ORKA, we first need to understand how the downsampling process influences the values of our optimization problem (\ref{optProb}). For our analysis, we assume that $D\in\mathbb{R}^{M\times N}$ has columns with $\|D_{:j}\|_2\leq1$. Let $r$ be the resampling rate and $\lambda=r\lambda^r+\lambda^{\text{diff}}\in\mathbb{Z}^N$ with $|\lambda^{\text{diff}}_k-\lambda^{\text{diff}}_{k-1}|\leq\left\lfloor\frac{r}{2}\right\rfloor$. We are interested in an error bound of
\begin{align}
	&\left|
	\left\langle A^{-1}, (S_\lambda(D))^T S_\lambda(D) \right\rangle -
	\left\langle A^{-1}, (S_{\lambda^r}(\Rdown(D)))^T S_{\lambda^r}(\Rdown(D)) \right\rangle
	\right|\label{fullerror}\\
	\leq&\left|
	\left\langle A^{-1}, (S_\lambda(D))^T S_\lambda(D) \right\rangle -
	\left\langle A^{-1}, (S_{r\lambda^r}(D))^T S_{r\lambda^r}(D) \right\rangle
	\right|\label{differror}\\
	+&\left|
	\left\langle A^{-1}, (S_{r\lambda^r}(D))^T S_{r\lambda^r}(D) \right\rangle -
	\left\langle A^{-1}, (S_{\lambda^r}(\Rdown(D)))^T S_{\lambda^r}(\Rdown(D)) \right\rangle
	\right|,\label{scalingerror}
\end{align}
which is the difference in the optimal value (\ref{optProb}) for the original movement $\lambda$ and the downsampled version $\lambda^r$. We bound (\ref{differror}) by
\begin{align*}
	&\left|
	\left\langle A^{-1}, (S_\lambda(D))^T S_\lambda(D) \right\rangle -
	\left\langle A^{-1}, (S_{r\lambda^r}(D))^T S_{r\lambda^r}(D) \right\rangle
	\right|\\
	\leq&\sum\limits_{j,k}^N\left|A_{jk}^{-1}\right|\left|\left\langle S_{\lambda_j}(D_{:j}) , S_{\lambda_k}(D_{:k}) \right\rangle - \left\langle S_{r\lambda^r_j}(D_{:j}) , S_{r\lambda^r_k}(D_{:k}) \right\rangle\right|\\
	=&\sum\limits_{j,k}^N\left|A_{jk}^{-1}\right|\left|\left\langle D_{:j} , S_{\lambda_k-\lambda_j}(D_{:k})  - S_{r\lambda^r_k-r\lambda^r_j}(D_{:k}) \right\rangle\right|\\
	=&\sum\limits_{j,k}^N\left|A_{jk}^{-1}\right|\left|\left\langle D_{:j} , S_{r\lambda^r_k-s\lambda^r_j}\left(S_{\lambda^{\text{diff}}_k-\lambda^{\text{diff}}_j}(D_{:k})  - D_{:k}\right) \right\rangle\right|\\
	\leq&\sum\limits_{j,k}^N\left|A_{jk}^{-1}\right|\left\|S_{\lambda^{\text{diff}}_k-\lambda^{\text{diff}}_j}(D_{:k})  - D_{:k}\right\|_2\\
\end{align*}
To find a bound for the norm, we use the Fourier transform together with the Fourier shift theorem to get
\begin{align*}
\left\|S_{\lambda^{\text{diff}}_k-\lambda^{\text{diff}}_j}(D_{:k})  - D_{:k}\right\|_2^2
&=
\left\|\text{diag}\left(e^{\frac{-2\pi i l (\lambda^{\text{diff}}_k-\lambda^{\text{diff}}_j)}{M}}\right)_{l=0}^{M-1} F_M(D_{:k})  - F_M(D_{:k})\right\|_2^2
\\
&=
\sum\limits_{l=0}^{M-1}\left|\left(e^{\frac{-2\pi i l (\lambda^{\text{diff}}_k-\lambda^{\text{diff}}_j)}{M}}-1\right) (F_M(D_{:k}))_l\right|^2
\\
&=
\sum\limits_{l=0}^{M-1} 2|(F_M(D_{:k}))_l|^2\left(1-\cos\frac{2\pi l (\lambda^{\text{diff}}_k-\lambda^{\text{diff}}_j)}{M}\right)\\
&=
4\sum\limits_{l=0}^{M-1} \left(|(F_M(D_{:k}))_l|\sin\frac{\pi l (\lambda^{\text{diff}}_k-\lambda^{\text{diff}}_j)}{M}\right)^2\\
\end{align*}
Note that the value of $\sin^2(\ldots)$ is the same for $l=l'$ and $l=M-l'$. Furthermore, we have $|\lambda^{\text{diff}}_k-\lambda^{\text{diff}}_j|\leq|j-k|\left\lfloor\frac{r}{2}\right\rfloor$ and $\sin^2(x)$ is increasing for $x\in[0,\pi/2]$. Last, from \cite{Bossmann22_ORKA} we know that $\left|A_{jk}^{-1}\right|=O(e^{-|j-k|})$, i.e., the coefficients of the inverse matrix decrease exponentially away from the diagonal. Thus, the error (\ref{differror}) scales as
\begin{align}\label{griderror}
O\left(
4\sum\limits_{l=0}^{M-1}|(F_M(D_{:k}))_l|^2 L(j,k,l)\right)
\end{align}
where
\begin{align}\label{Lgrid}
	L(j,k,l)=\begin{cases}e^{-|j-k|}
		\left(\sin\frac{\pi l |j-k|\left\lfloor\frac{r}{2}\right\rfloor}{M}\right)^2 & \text{, if } l |j-k|\left\lfloor\frac{r}{2}\right\rfloor\leq\frac{M}{2}\text{ and }l\leq\frac{M}{2}\\
		L(j,k,M-l) & \text{, if }l>\frac{M}{2}\\
		e^{-|j-k|} & \text{, otherwise}
	\end{cases},
\end{align}
i.e., the error is small when the data is mostly low frequency. Furthermore, decreasing the resampling rate $r$ can also decrease the error.

To find an upper bound for (\ref{scalingerror}), we use the matrix representation of Corollary \ref{samplingMatrix}. We have
\begin{align*}
	(S_{\lambda^r}(\Rdown(D)))^TS_{\lambda^r}(\Rdown(D))
	&=
	(S_{\lambda^r}(R^TD))^TS_{\lambda^r}(R^TD)\\
	&=
	(R^TS_{r\lambda^r}(D))^TR^TS_{r\lambda^r}(D)\\
	&=
	(RR^TS_{r\lambda^r}(D))^TRR^TS_{r\lambda^r}(D)
\end{align*}
Since $RR^T$ is an orthogonal projection, we can write $S_{r\lambda^r}(D)=RR^TS_{r\lambda^r}(D)+X$ where $RR^TS_{r\lambda^r}(D)\perp X$. It follows that
\begin{align*}
(S_{r\lambda^r}(D))^T S_{r\lambda^r}(D)
&=
(RR^TS_{r\lambda^r}(D)+X)^T (RR^TS_{r\lambda^r}(D)+X)\\
&=(RR^TS_{r\lambda^r}(D))^TRR^TS_{r\lambda^r}(D)+X^TX
\end{align*}
Note that $X=S_{r\lambda^r}(D)-RR^TS_{r\lambda^r}(D)=(\mathbcal{I}_M-RR^T)S_{r\lambda^r}(D)$. Altogether, we obtain for the scaling error (\ref{scalingerror})
\begin{align*}
	\left|\left\langle A^{-1},(\mathbcal{I}_M-RR^T)S_{r\lambda^r}(D)  \right\rangle\right|
	&\leq
	\|A^{-1}\|_F\|(\mathbcal{I}_M-RR^T)S_{r\lambda^r}(D)\|_F\\
	&=\|A^{-1}\|_F\|(\mathbcal{I}_M-RR^T)D\|_F
\end{align*}
i.e., the error scales with the approximation error of the orthogonal projection.

To summarize, the overall error (\ref{fullerror}) depends on three factors: the chosen resampling rate $r$, the frequency distribution of the original data, and the approximation error of the orthogonal projection. Note, that this is the error done in one iteration of fg-ORKA. The overall error is given as the sum over all errors for the different iterations. However, we want to remind the reader about the discussion at the beginning of this subsection. As long as fg-ORKA reconstructs the optimal path in one iteration, the approximation error will remain $0$ for this step. This means, looking at each iteration individually can actually tell us more about the success rate than just looking at the overall error.

\section{Optimal resampling}

In this section, we introduce a third resampling strategy that is based on minimizing the error bounds discussed in the last section. We start by minimizing the approximation error of the orthogonal projection, i.e., we are searching for the matrix $R$ that solves
\begin{align*}
	\min\limits_{R}\|D-RR^TD\|_F^2.
\end{align*}
Remember that due to Corollary \ref{samplingMatrix} the columns of the matrix $R$ are shifted versions of a vector $\rho\in\mathbb{R}^M$. Let $X=R^TD$ for now. We can apply a Fourier transform and use the Fourier shift theorem to obtain
\begin{align*}
	\|D-RX\|_F^2&=\|\hat{D}-\hat{R}X\|_F^2=\left\|\hat{D}-\text{diag}(\hat{\rho})\begin{pmatrix}\hat{X} \\ \vdots \\ \hat{X}\end{pmatrix}\right\|_F^2,
\end{align*}
where $\text{diag}(\hat{\rho})$ is a diagonal matrix with $\hat{\rho}$ on its diagonal and $\hat{X}$ is repeated $r$ times in this expression. Denote the $k$-th row of $\hat{D}$ and $\hat{X}$ by $\hat{D}_{k,:}$ and $\hat{X}_{k,:}$ respectively. Then the above term can be rewritten as
\begin{align}\label{rank1approximation}
\left\|\hat{D}-\text{diag}(\hat{\rho})\begin{pmatrix}\hat{X} \\ \vdots \\ \hat{X}\end{pmatrix}\right\|_F^2
&=
\sum\limits_{k=0}^{M/r-1}\left\|
\begin{pmatrix}
	\hat{D}_{k+lM/r,:}
\end{pmatrix}_{l=0}^{r-1}-\begin{pmatrix}
\hat\rho_{k+lM/r}
\end{pmatrix}_{l=0}^{r-1}
\hat{X}_{k,:}
\right\|_F^2.
\end{align}
Here, each summand can be interpreted as a rank-$1$ approximation of parts of the data. It is well known that the best rank-$1$ approximation is given by the largest singular value and its corresponding vectors. Hence, we can set
\begin{align}\label{eigenvector_solution}
	\begin{pmatrix}
	\hat\rho_{k+lM/r}
\end{pmatrix}_{l=0}^{r-1}=\alpha_ku_k
\end{align}
where $\alpha_k\in\mathbb{C}$, $\alpha_k\neq0$ and $u_k$ is the corresponding eigenvector of the largest eigenvalue of
\begin{align*}
\begin{pmatrix}
	\hat{D}_{k+lM/r,:}
\end{pmatrix}_{l=0}^{r-1}
\left(
\begin{pmatrix}
	\hat{D}_{k+lM/r,:}
\end{pmatrix}_{l=0}^{r-1}\right)^* \in\mathbb{C}^{r\times r}.
\end{align*}
Note that from our complexity analysis we concluded that $r=2$ or $r=3$ are the most suitable resampling rates and thus the above eigenvalue problem can be solved easily. Furthermore, note that $\hat{D}$ is the Fourier transform of real data and thus $\hat{D}_{k,:}=\overline{\hat{D}}_{M-k,:}$ for all $k=1,\ldots,M-1$. We can avoid solving for half of the eigenvectors by just substituting $\hat{\rho}_{M-k}=\overline{\hat{\rho}}_{k}$ for all $k=1,\ldots,\frac{M-1}{2}$. This way it is also guaranteed that $\rho\in\mathbb{R}^M$.

Next, we force the orthogonality requirement from Corollary \ref{samplingMatrix}, i.e., we want $R^TR=\mathbcal{I}_{M/r}$. Therefore, let $\delta_l=1$ for $l=0$ and $\delta_l=0$ for $l\neq0$. Then
\begin{align*}
	\delta_{l}&=\langle\rho,S_{lr}(\rho)\rangle=\langle\hat{\rho},F_M(S_{lr}(\rho))\rangle=\sum\limits_{k=0}^{M-1}|\hat{\rho}_k|^2e^{-\frac{2\pi i krl}{M}}\\
	&=\sum\limits_{k=0}^{M/r-1}\sum\limits_{j=0}^{j-1}|\hat{\rho}_{k+jM/r}|^2e^{-\frac{2\pi i (k+jM/r)rl}{M}}
	=\sum\limits_{k=0}^{M/r-1}|\alpha_k|^2e^{-\frac{2\pi ikl}{M/r}}
	=F_{M/r}^{-1}\begin{pmatrix}
		|\alpha_k|^2
	\end{pmatrix}_{k=0}^{M/r-1}
\end{align*}
This yields that $\alpha_k$ has constant magnitude with $|\alpha_k|=\sqrt[4]{\frac{r}{M}}$. This only leaves the phases of $\alpha_k$ to be chosen freely. However, the approximation error (\ref{rank1approximation}) stays the same independent from the chosen phases. Thus, we simply set $\alpha_k=\sqrt[4]{\frac{r}{M}}$. Our resampling matrix $R$ can then be constructed from the obtained vector $\rho$.

\begin{remark}
	For $r=2$ and $M$ divisible by $4$ the coefficients $\hat\rho_{M/4}$ and $\hat{\rho}_{3M/4}$ appear in the same eigenvalue problem (\ref{eigenvector_solution}). Hence the phase $\alpha_{M/4}$ is uniquely defined by the condition $\hat\rho_{M/4}=\overline{\hat{\rho}}_{3M/4}$. The same holds for $r=3$, $M$ divisible by $6$, and the phase $\alpha_{M/6}$. The other phases can be chosen freely which includes trivial ambiguities such as shifting the vector $\rho$ by $lr$ elements.
\end{remark} 

\begin{remark}
	The proposed strategy finds the optimal downsampling for given data $D$. Note that when artificially upsampling the data the error (\ref{scalingerror}) will be $0$ anyway. One may want to find the optimal upsampling operator by optimizing (\ref{griderror}). However, note that the coefficient $L(j,k,l)$ (\ref{Lgrid}) is smallest for $l$ close to $0$ or close to $M$, i.e., the error is small for data that is mostly low frequency. Thus any upsampling method that for example upsamples by setting high frequency coefficients to zero will perform well. The introduced Wavelet or Fourier resampling operators are designed in exactly this manner. Indeed, we have tried different upsampling techniques during our experiments with little to no difference in the results. Thus we spare the reader a more detailed analysis.
\end{remark}

\section{Numerics}

We compare the new fg-ORKA algorithm against the old ORKA approach in several tests. As resampling strategies we apply Wavelet, Fourier, and optimal resampling with $r=2$ and $r=3$ (no Wavelet resampling). In our first test, we compare the runtime complexity of all algorithms. The second test demonstrates the refined reconstruction of $\lambda$ for non-integer shifts. Afterwards, we analyze the stability under noise and for high frequency data. Last, we test the new algorithm on different applications.

\subsection{Runtime}

To compare the runtime of all variants, we apply the algorithm on randomly created data $D\in\mathbb{R}^{641\times 100}$, i.e., $100$ columns with $641$ pixels each, which is needed to allow sufficiently many downsampling steps when we test with large parameters $C$. The mean runtime over $50$ runs is measured for all experiments. In Figure \ref{fig:runtime_a} the runtime of all fg-ORKA variants is compared against the runtime of the original algorithm for $C=5$ and different parameters $K$. We can see that even for this small choice of $C$, the fg-ORKA variant outperforms the original approach by far (note that the y-axis uses a log-scale). Although both variants scale exponentially in $K$, the fg-ORKA approach grows much slower with a complexity of only $O(3^K)$ instead of $O(11^K)$.

Next, we want to compare the different downsampling approaches for varying parameter $C$. As seen in the first experiment, the original approach has a vastly larger runtime even for small $C$, for larger values of $C$ the algorithm might fail completely as it requires too much memory. For this reason, we only show the runtime of the different fg-ORKA variants in the next to experiments. Figures \ref{fig:runtime_b} and \ref{fig:runtime_c} show the runtime for all five variants with fixed parameter $K=5$ and $K=15$. For the first choice of $K$, we can clearly see that the runtime follows the complexity of the resampling technique. Here, Fourier resampling is the simplest and fastest approach, followed by Wavelet resampling, and the optimal downsampling strategy being the slowest. Furthermore, we can see that the larger resampling rate $r=3$ is usually faster than the smaller choice $r=2$. However, the difference between the strategy dimishes when increasing the paramter $K$, as can be seen in Figure \ref{fig:runtime_c}. If the paramter $K$ is large enough, the complexity of the ORKA algorithm dominates and the exact resampling strategy plays a very minor role for the runtime. We see that the runtime increases by steps of approximately the same size at certain thresholds. The thresholds are exactly the points $C$ at which the parameter $L$ is incremented by one due to the rounding performed on Equation (\ref{chooseL}). As $L$ increases by one, an additional iteration and thus an additional ORKA call has to be performed, which explains why the runtime increase is about the same for each step. We can clearly see, that the choice $r=3$ leads to a smaller number of iterations and thus to a faster runtime. However, especially for $C\leq16$ it is possible that the parameter $L$ is the same for both choices $r=2$ and $r=3$. In this case the runtime is indifferent.

\begin{figure}
	\begin{center}
	\subfloat[\label{fig:runtime_a}]{\includegraphics[width=0.3\textwidth]{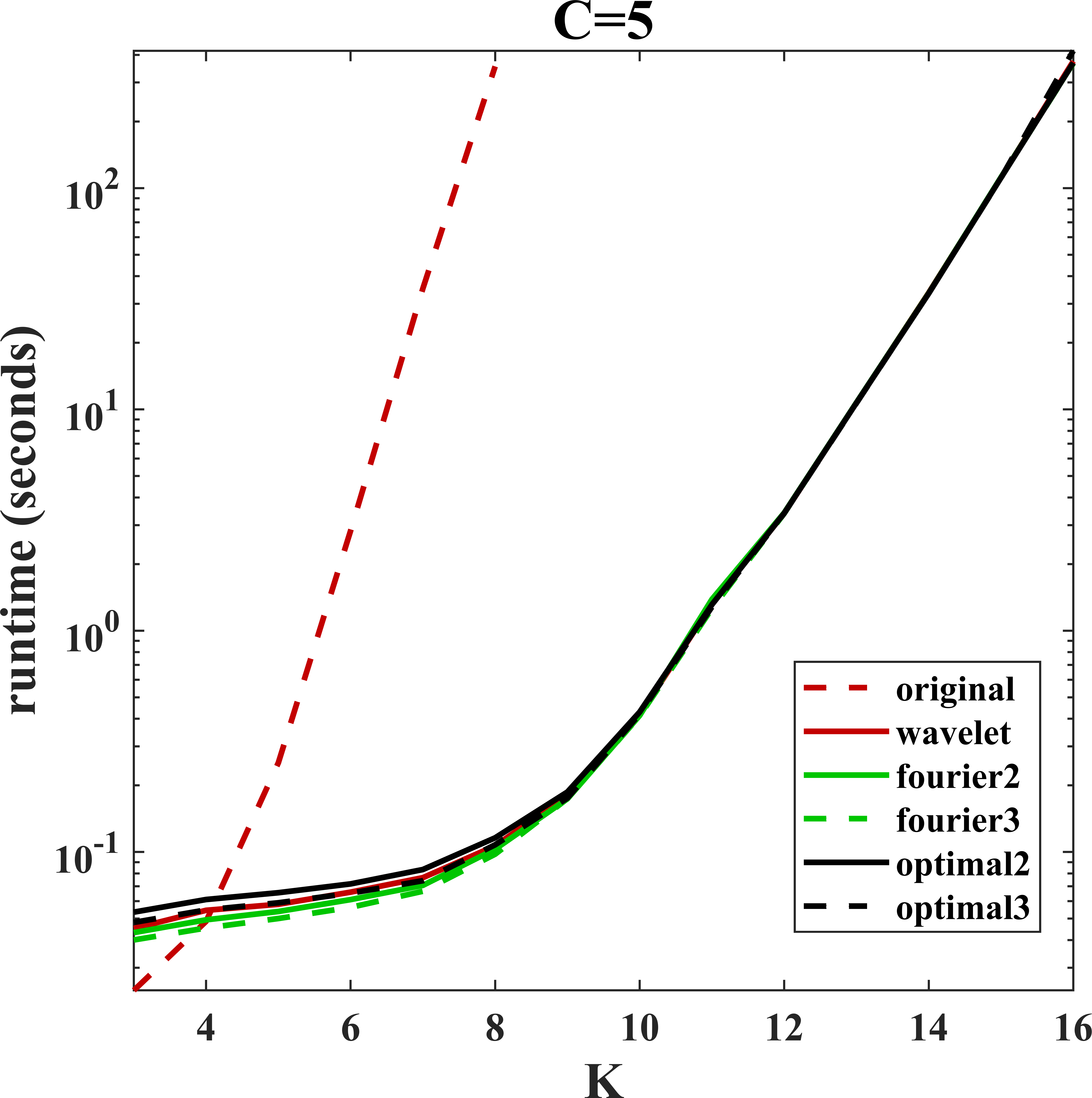}}\ 
	\subfloat[\label{fig:runtime_b}]{\includegraphics[width=0.3\textwidth]{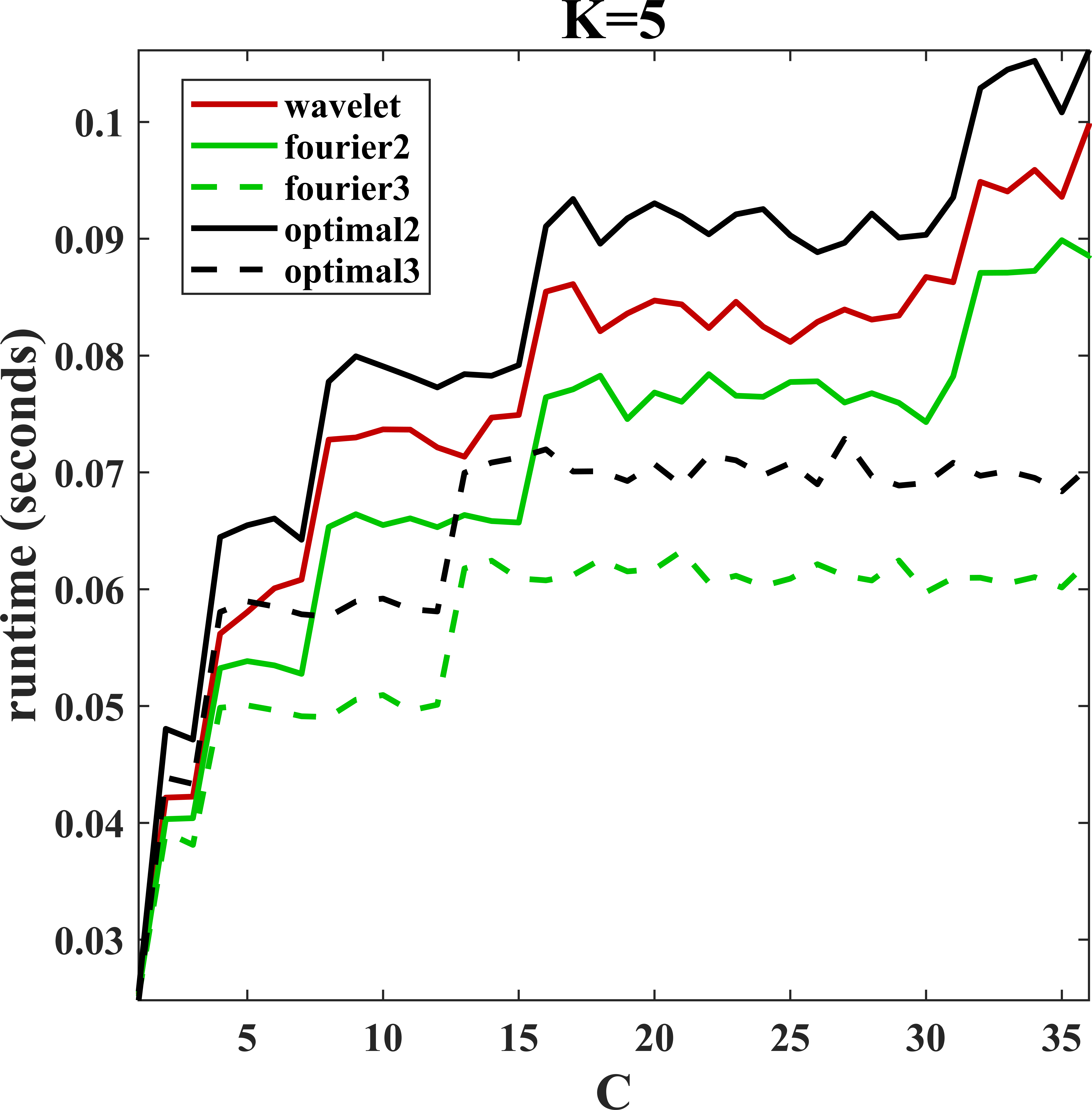}}\ 
	\subfloat[\label{fig:runtime_c}]{\includegraphics[width=0.3\textwidth]{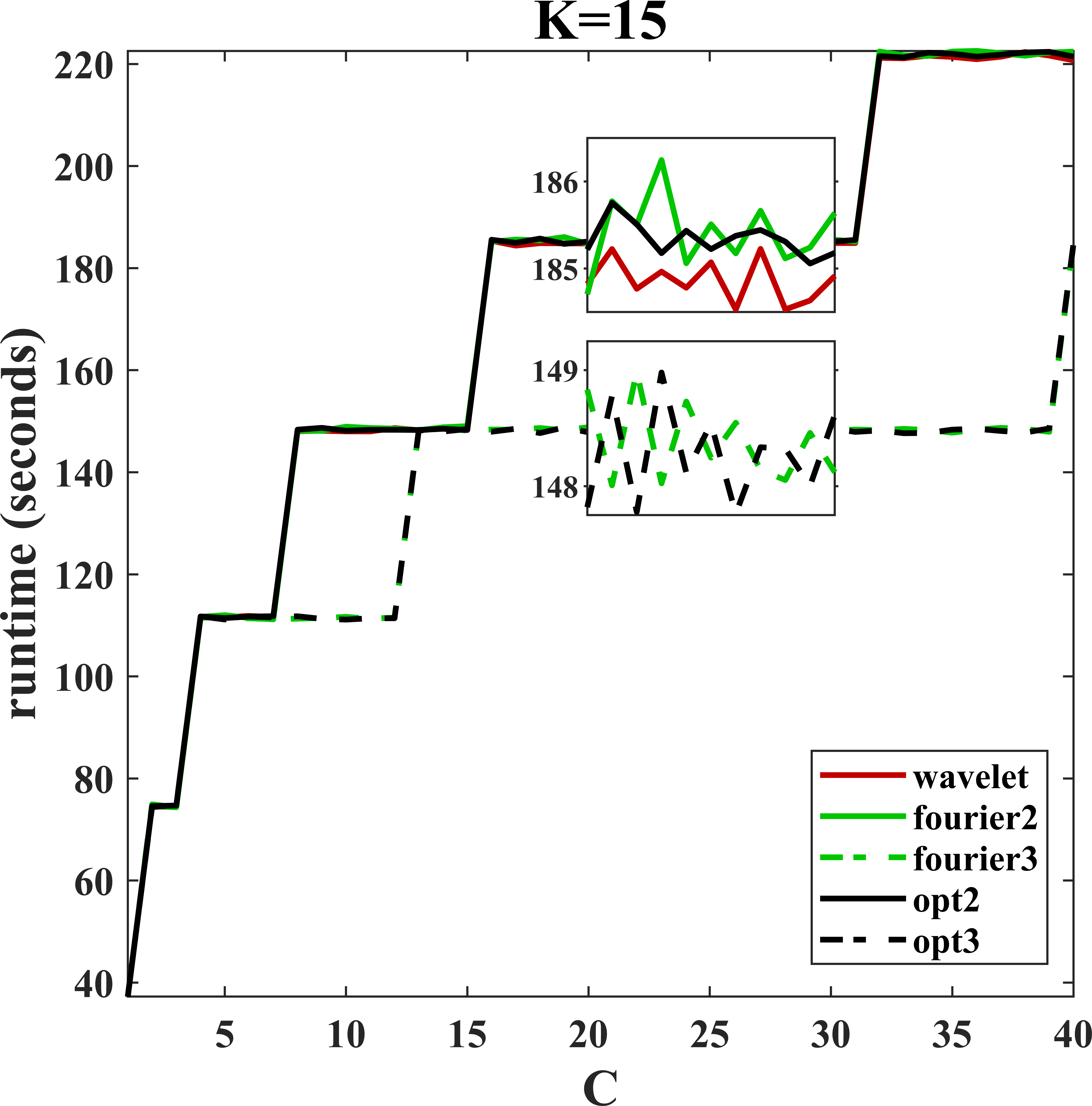}}
	\caption{Runtime of fg-ORKA in different settings: a) $C=5$ for increasing $K$ compared to the original ORKA approach; b) $K=5$ for increasing $C$; c) $K=15$ for increasing $C$.}
	\label{fig:runtime}
	\end{center}
\end{figure}

\subsection{Non-integer shift}

In our next experiment we test the reconstruction of non-integer shift vectors $\lambda$ by the proposed upsampling strategy. Therefore, we create random test data $\tilde{D}\in\mathbb{R}^{500\times100}$ in the following way. First, we create a random shift vector $\tilde{\lambda}\in\mathbb{Z}^{100}$ where $\tilde{\lambda}_k-\tilde{\lambda}_{k+1}\in\{-4,-3,\ldots,4\}$ is uniformly distributed. Second, we create a random sampling vector $d\in\mathbb{R}^{500}$ sampled from a standard Gauss distribution. Last, we set $\tilde{D}_{:k}=S_{k}(g*d)$ where $g\in\mathbb{R}^{500}$ is defined as
\begin{align*}
	g_k=\begin{cases}
		e^{-(0.4(k-6))^2} & k=1,\ldots,11 \\
		0 & \text{otherwise}
	\end{cases}.
\end{align*}
This means, the matrix $\tilde{D}$ exactly fits the proposed object model. However, we now downsample by a factor of $5$ and define the data $D\in\mathbb{R}^{100\times100}$ with $D_{j,k}=\tilde{D}_{5j,k}$. Now, $D$ requires a non-integer shift vector of $\lambda=\frac{\tilde{\lambda}}{5}$. Note that the convolution with a kernel such as $g$ is required as otherwise the columns can be completely independent random samples after the downsampling step.

We use the fg-ORKA algorithm to reconstruct the shifts with different levels of upsampling $J$. We measure the error between the original shift $\lambda^{\text{org}}$ and the reconstructed shift $\lambda^{\text{rec}}$ as
\begin{align}\label{lambda_error_measure}
	\text{Error}(\lambda^{\text{rec}})=N^{-1}\left\|\lambda^{\text{org}}-\lambda^{\text{rec}}-\text{mean}\left(\lambda^{\text{org}}-\lambda^{\text{rec}}\right)\right\|_1,
\end{align} 
where $N$ is the number of columns in $D$, i.e., $N=100$ for this experiment. The mean value is subtracted as the solution of the ORKA problem (\ref{ORKAopt}) is ambiguous. For any shift $\lambda$ the shifts $\lambda+n$ with $n\in\mathbb{Z}$ gives the same minimum value. Thus, we say that the reconstruction is optimal if $\lambda^{\text{rec}}$ fits $\lambda^{\text{org}}$ upto a constant shift.

Figure \ref{fig:nonintshift} shows the mean reconstruction error over $50$ runs for the three different upsampling methods: Wavelet, Fourier ($r=2$), and Fourier ($r=3$). (Note that the y-axis uses a logarithmic scale.) We see that the reconstruction gets more accurate with increasing levels of upsampling $J$. Nevertheless, a small choice of $J$ seems sufficient as the error does no longer improve much after the first $4$ to $5$ iterations. As expected, the approximation error is lower for a larger parameter choice $K=15$. Furthermore, we note that Fourier upsampling with $r=3$ yields the best results in the first iterations, since upsampling by a factor for $3$ gives a higher resolution compared to $r=2$. The final results, however, is worse for $r=3$. We assume that this is because the setting $r=3$ is more prone to errors (see Remark \ref{rmk:numbersBaseRelation}). For us the most suprising result of this experiment is, that upsampling using the simple Fourier approach returns better results compared to a Wavelet based upsampling.

For $J=0$ all methods are equivalent to the original ORKA method. In this case we expect the optimal path to be $\text{round}(\lambda^{\text{org}})$. Since $\lambda^{\text{org}}$ is drawn from a uniform distribution we can calculate the expected error in this case as
\begin{align*}
	\frac{1}{9}\sum\limits_{k=-4}^4\left|\frac{k}{5}-\text{round}\left(\frac{k}{5}\right)\right|=\frac{4}{15}\approx0.2667
\end{align*}
which is about the value that we achieve with $K=15$.

\begin{figure}
	\begin{center}
		\includegraphics[width=0.3\textwidth]{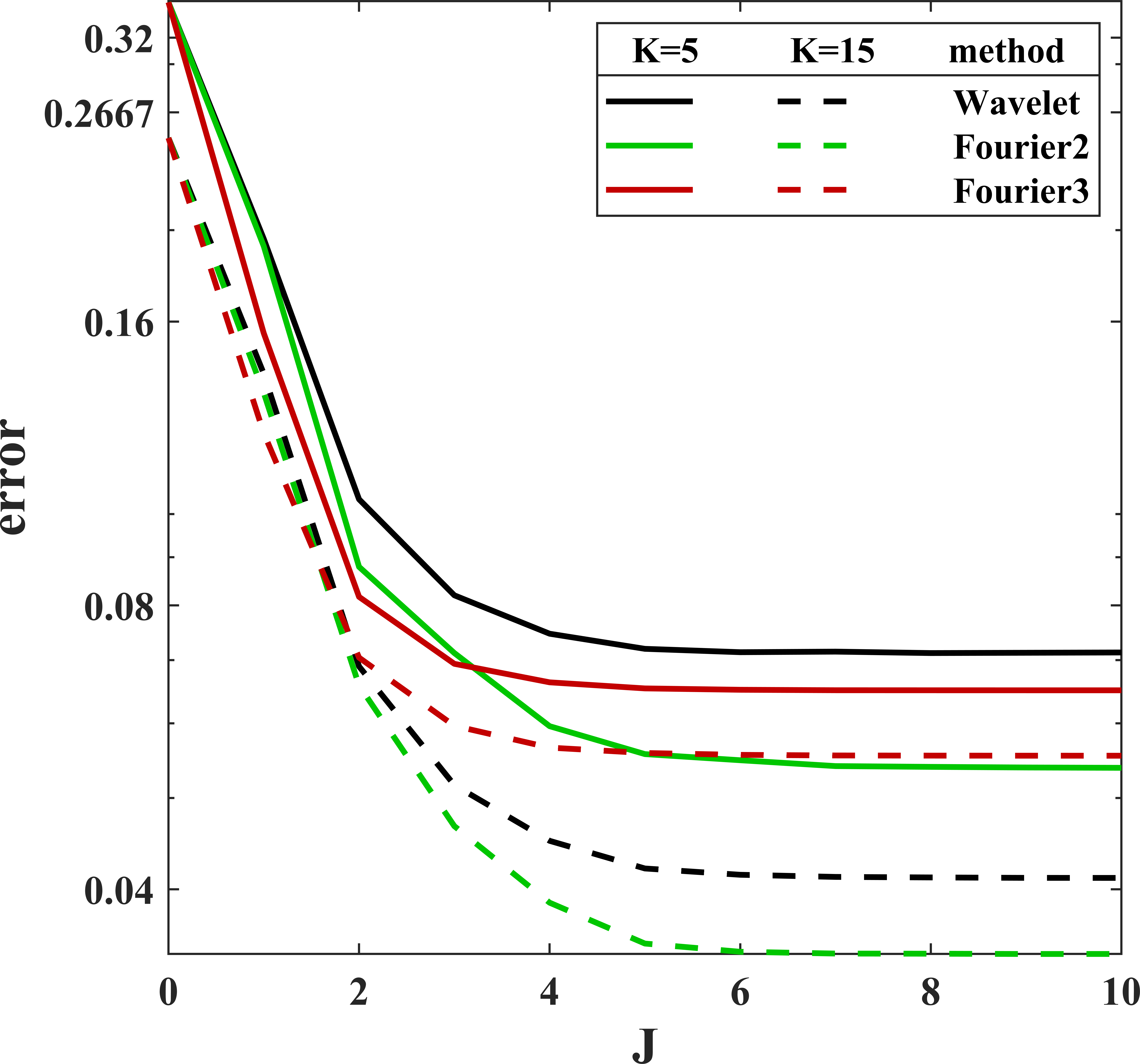}
		\caption{Approximation error for non-integer shifts and different levels of upsampling.}
		\label{fig:nonintshift}
	\end{center}
\end{figure}

\subsection{Noisy and high frequency data}

In this subsection, we test the stability of all downsampling approaches. For this, we add different levels of noise to the data and also apply a high-pass filter that sets the lower frequencies to $0$. As seen in the error analysis, we expect the approximation error to get worse when the data only contains high frequencies. Moreover, the suggested Fourier and Wavelet downsampling are low-pass filters and thus should also struggle with this kind of data.

In our first test, we take the mean approximation error (\ref{lambda_error_measure}) for $K=5$ over $200$ runs with random data $D\in\mathbb{R}^{641\times100}$ constructed as in the previous experiment (except for the downsampling step). We use data with a random shift vector $\lambda\in\mathbb{Z}^{100}$ that has a maximum shift difference $|\lambda_k-\lambda_{k+1}|\leq C$ of $C=5$ and $C=27$. For $C=5$ we can also calculate the mean approximation error of the original ORKA method, for $C=27$ the calculation fails due to insufficient memory. The results can be seen in Figure \ref{fig:K5noisePlots_allFreq_slope5} and \ref{fig:K5noisePlots_allFreq_slope27}. For $C=5$ all downsampling methods with $r=2$ perform equally and are as good as the original ORKA method. They are able to reconstruct the original shift even for noisy data up to a PSNR of about $15$. The downsampling methods with $r=3$ preform slightly worse. For $C=27$ (Figure \ref{fig:K5noisePlots_allFreq_slope27}) the approximation errors are higher on average as we have more possible paths $\lambda$ in this case. Here, the optimal downsampling methods both perform worse than the Wavelet or Fourier based methods. This shows that although the loss of information in each downsampling step is minimized, this is not necessarily the best way to preserve the information about the shift. Interestingly, for $C=27$ the Fourier downsampling technique with $r=3$ performs best. Here, it pays off that $r=3$ requires less downsampling steps and thus less iterations.

In Figure \ref{fig:K5noisePlots_highFreq_slope5} and \ref{fig:K5noisePlots_highFreq_slope27} we repeat the same experiments but now apply a high-pass filter to the randomly created data beforehand. The filter removes the $160$ lowest frequencies (out of $641$). We observe that for both cases $C=5$ and $C=27$ the optimal downsampling strategies now perform better compared to the other strategies with same resampling factor $r$. Only the original ORKA method is able to construct the correct shift up to a PSNR of about $8$. Wavelet downsampling can handle the high frequency data slightly better than the more simple Fourier based approach. Especially for $C=27$ the Fourier downsampling with $r=3$, which performed best before, now suffers from the missing low frequency information and performs worst.

\begin{figure}
	\begin{center}
		\subfloat[\label{fig:K5noisePlots_allFreq_slope5}]{\includegraphics[width=0.3\textwidth]{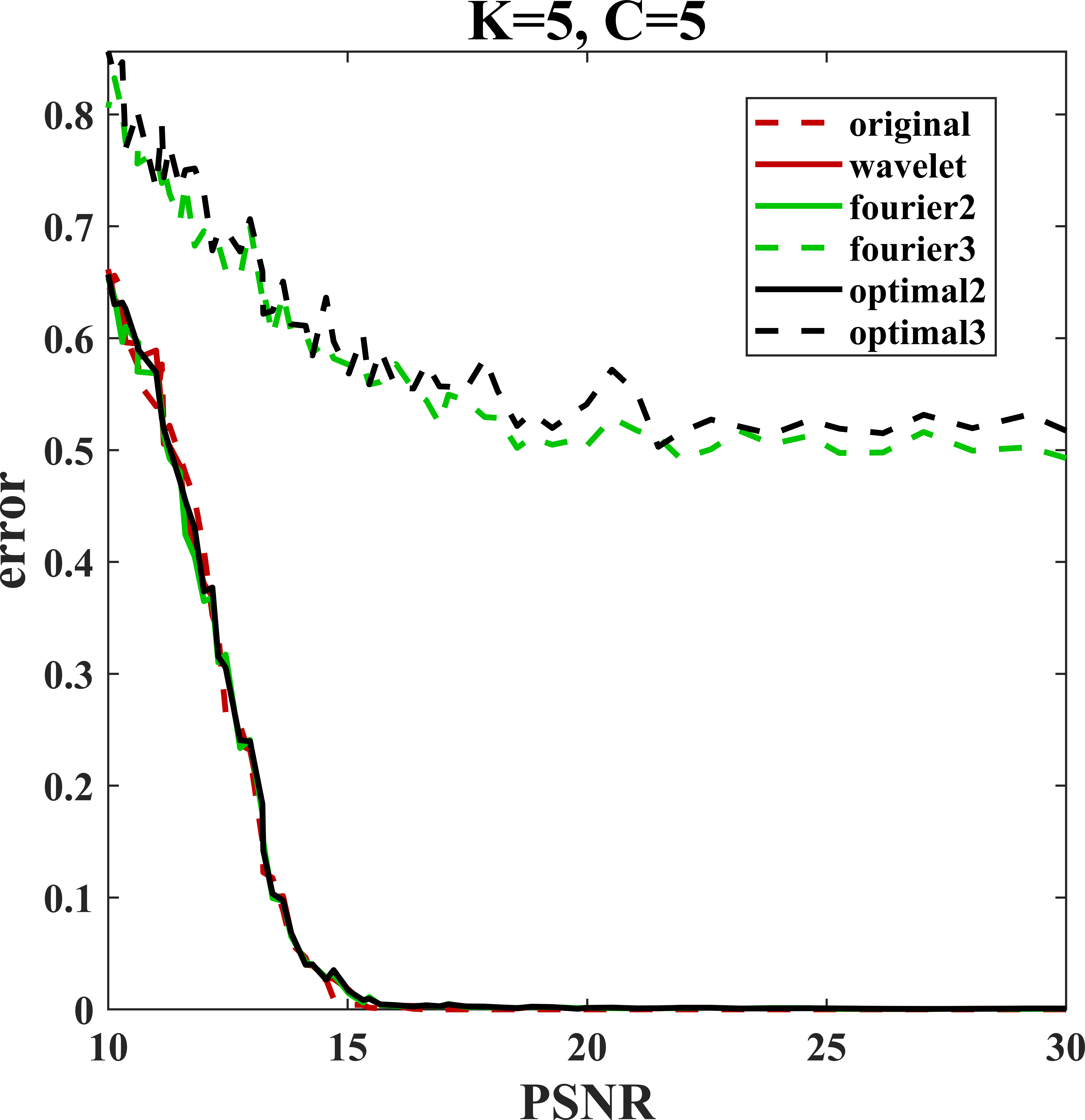}}\ 
		\subfloat[\label{fig:K5noisePlots_allFreq_slope27}]{\includegraphics[width=0.3\textwidth]{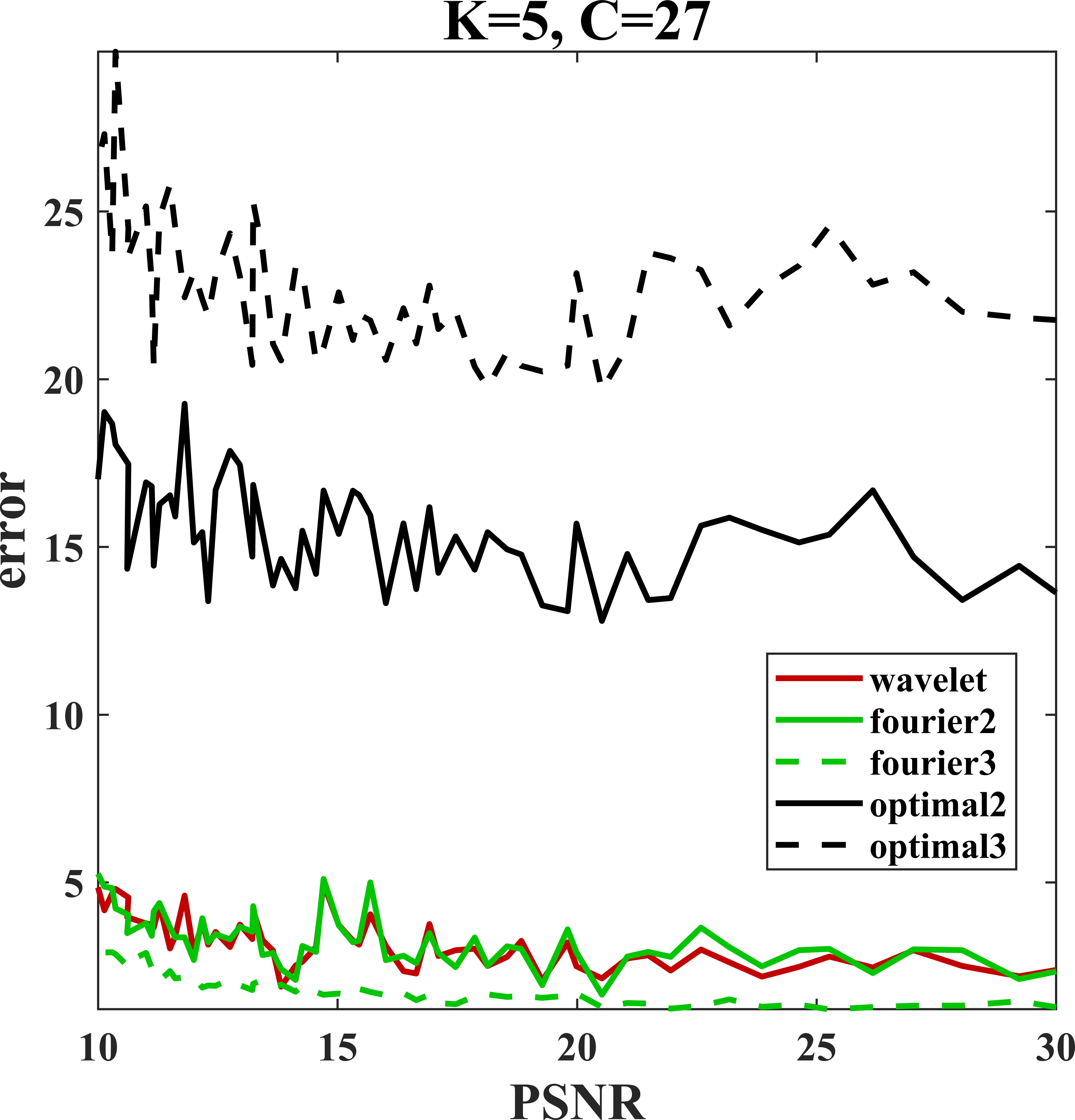}}\\
		\subfloat[\label{fig:K5noisePlots_highFreq_slope5}]{\includegraphics[width=0.3\textwidth]{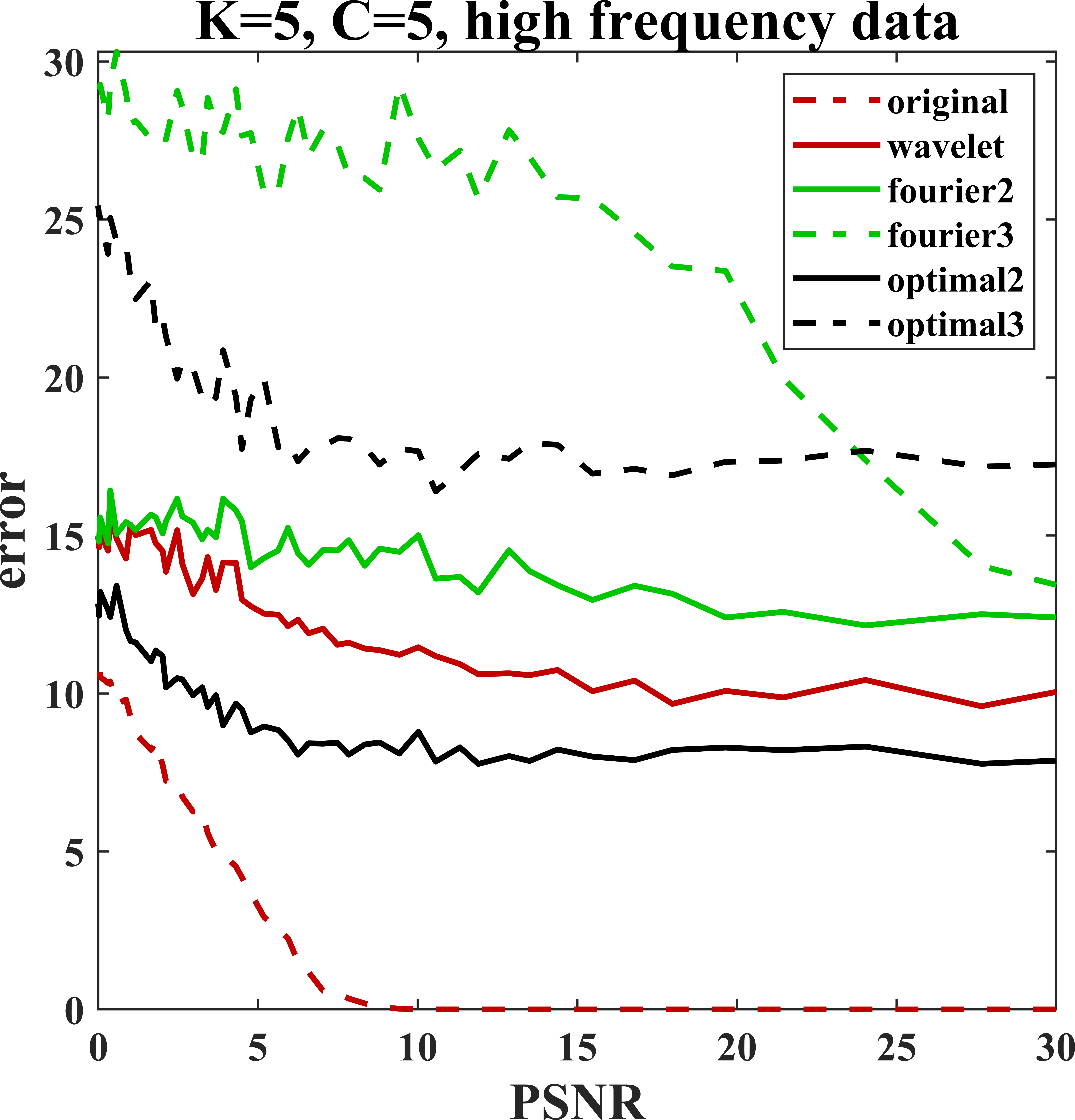}}\ 
		\subfloat[\label{fig:K5noisePlots_highFreq_slope27}]{\includegraphics[width=0.3\textwidth]{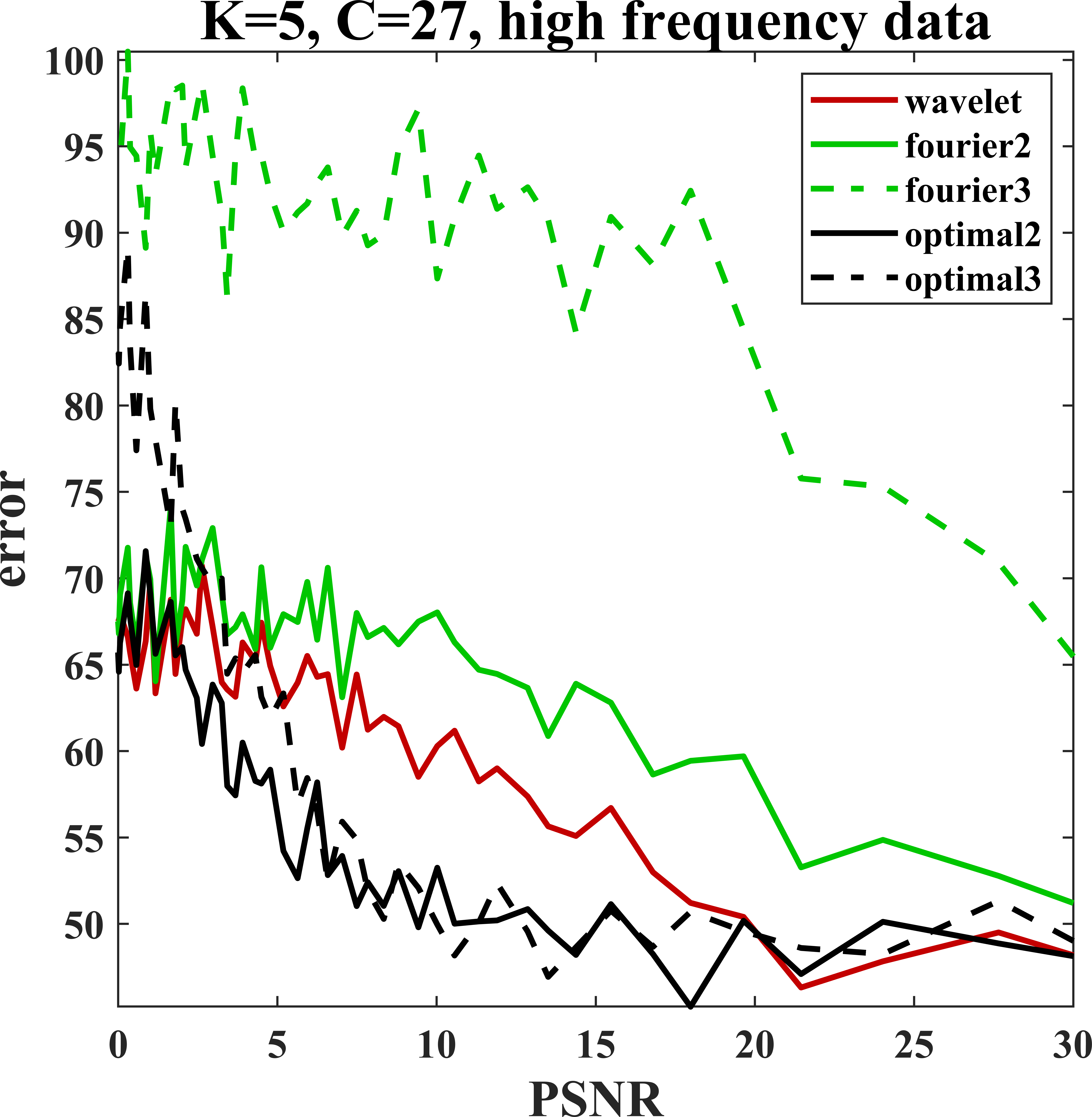}}
		\caption{Mean approximation error for $K=5$ and noisy data with a maximum shift difference of $C=5$ (left) or $C=27$ (right). In (c) and (d) only high frequency data was used, where the $160$ lowest frequencies are $0$.}
		\label{fig:K5noisePlots}
	\end{center}
\end{figure}

We run the above experiment for further combinations of noisy and high-pass filtered data to evaluate which downsampling method performs best in these cases. Figure \ref{fig:K5bestMethod} shows the best method for all combinations. Here the x-axis shows the PSNR value and the y-axis gives the number of filtered low frequencies. Note that we gradually increased the Gaussian noise added to the data, but the PSNR increases faster the more low frequencies are filtered. This is why Figure \ref{fig:K5bestMethod} shows a curved image. For $C=5$ we observe a similar pattern as expected from our observations before. As long as most of the low frequencies are preserved, Fourier and Wavelet based downsampling with $r=2$ dominates the image. For high-pass filtered data however the best method is by far the optimal downsampling approach. For $C=27$ we get a similar result where the optimal downsampling again being the best method for most of the high frequency data. However, since $r=3$ requires less iterations in this case, we also see Fourier downsampling with $r=3$ and optimal downsampling with $r=3$ show up in some areas. While optimal downsampling with $r=3$ can be used on some high-frequency data, Fourier downsampling with $r=3$ is best for data that still contains low frequency information. It is then overtaken by Wavelet based downsampling. Fourier downsampling with $r=2$ only appears in a small area where the PSNR is not too low and not too many low frequencies are filtered out. Last, we want to point out the small lengthy region in both images that appear around $150$ filtered low frequencies and starts at the right boarder. As of now we are not sure why this phenomenon occurs.

\begin{figure}
	\begin{center}
		\subfloat[\label{fig:K5bestMethod_slope5}]{\includegraphics[width=0.3\textwidth]{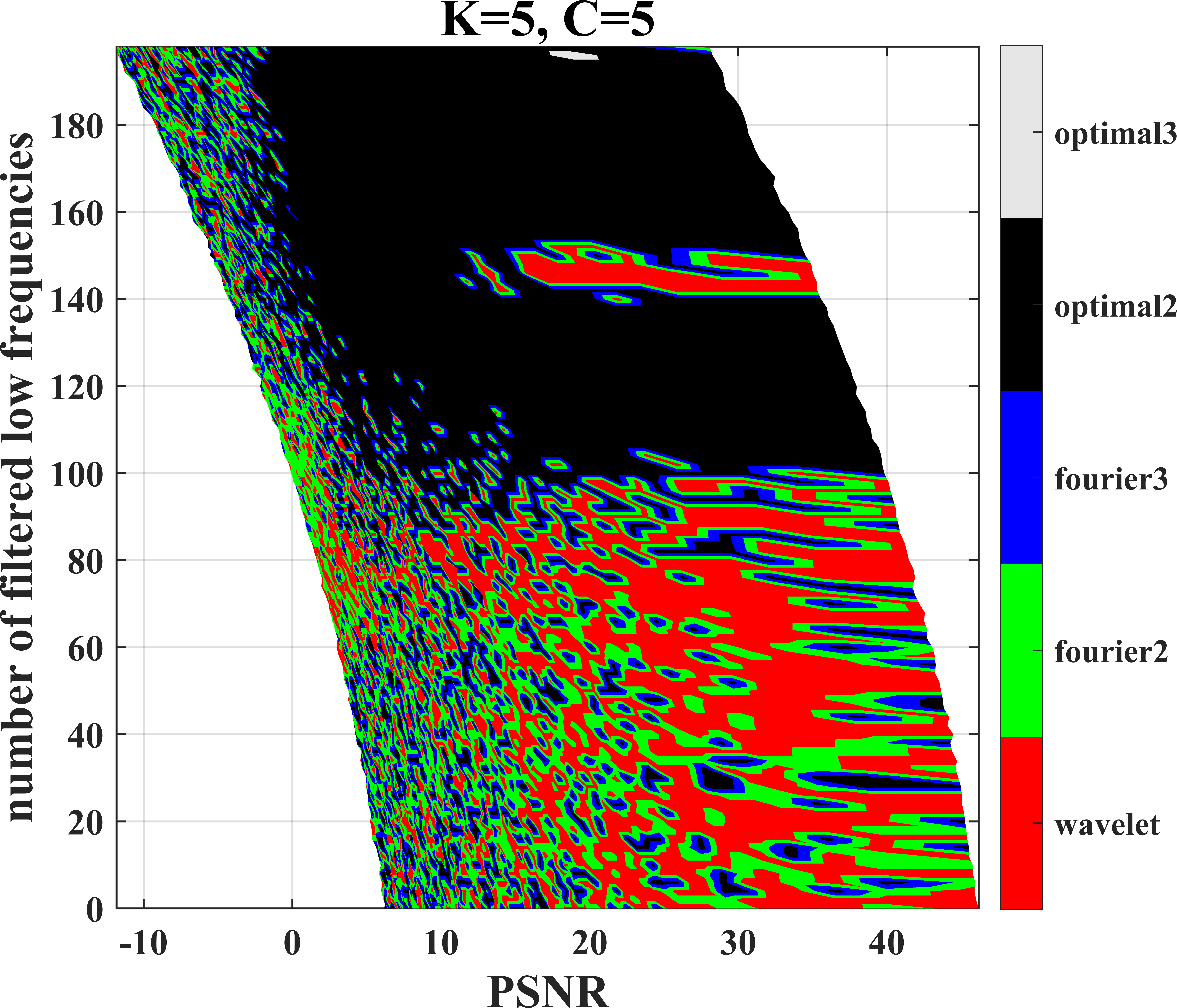}}\ 
		\subfloat[\label{fig:K5bestMethod_slope27}]{\includegraphics[width=0.3\textwidth]{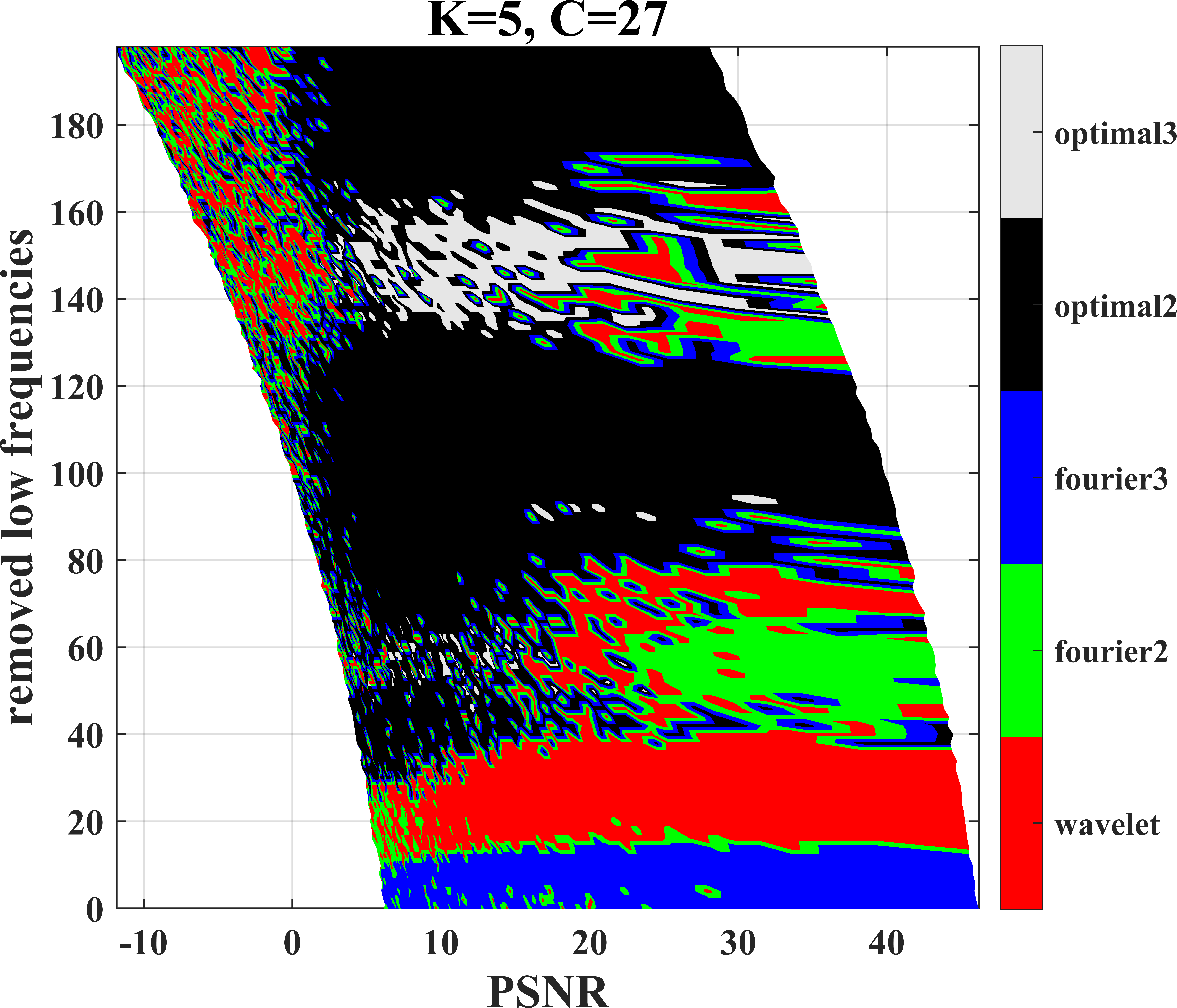}}
		\caption{Best downsampling methods on average for $K=5$, different noise levels and data with low frequencies removed. The maximum shift difference is $C=5$ (a) and $C=27$.}
		\label{fig:K5bestMethod}
	\end{center}
\end{figure}

We repeat the same experiment now with a much higher parameter $K=15$. Figure \ref{fig:K15noisePlots} shows the obtained approximation errors for $C=5$, $C=27$, random data, and random high frequency data (compare Figure \ref{fig:K5noisePlots}). For $C=5$ (Figure \ref{fig:K15noisePlots_allFreq_slope5} and \ref{fig:K15noisePlots_highFreq_slope5}) we also show the results of the original ORKA method with parameter $K=5$ again, for $K=15$ the original method will fail due to insufficient memory. We can see that the approximation errors are smaller compared to $K=5$. The relation between the different upsampling methods is similar to the previous case. Resamplings with $r=2$ performs better on average than $r=3$, for the high-pass filtered data the optimal downsampling approach returns the best results, followed by Wavelet downsampling, and last Fourier downsampling. With the higher choice of $K=15$ we can also outperform the original ORKA for highly noised data. This shows that even in a setup where the original algorithm can be used it can be beneficial to switch to the iterative version. Moreover, for $C=27$ we have again the downsampling method with $r=3$ perform good. Here, the optimal approach is best for high frequency data, while Fourier based downsampling with $r=3$ performs good unfiltered random data.

\begin{figure}
	\begin{center}
		\subfloat[\label{fig:K15noisePlots_allFreq_slope5}]{\includegraphics[width=0.3\textwidth]{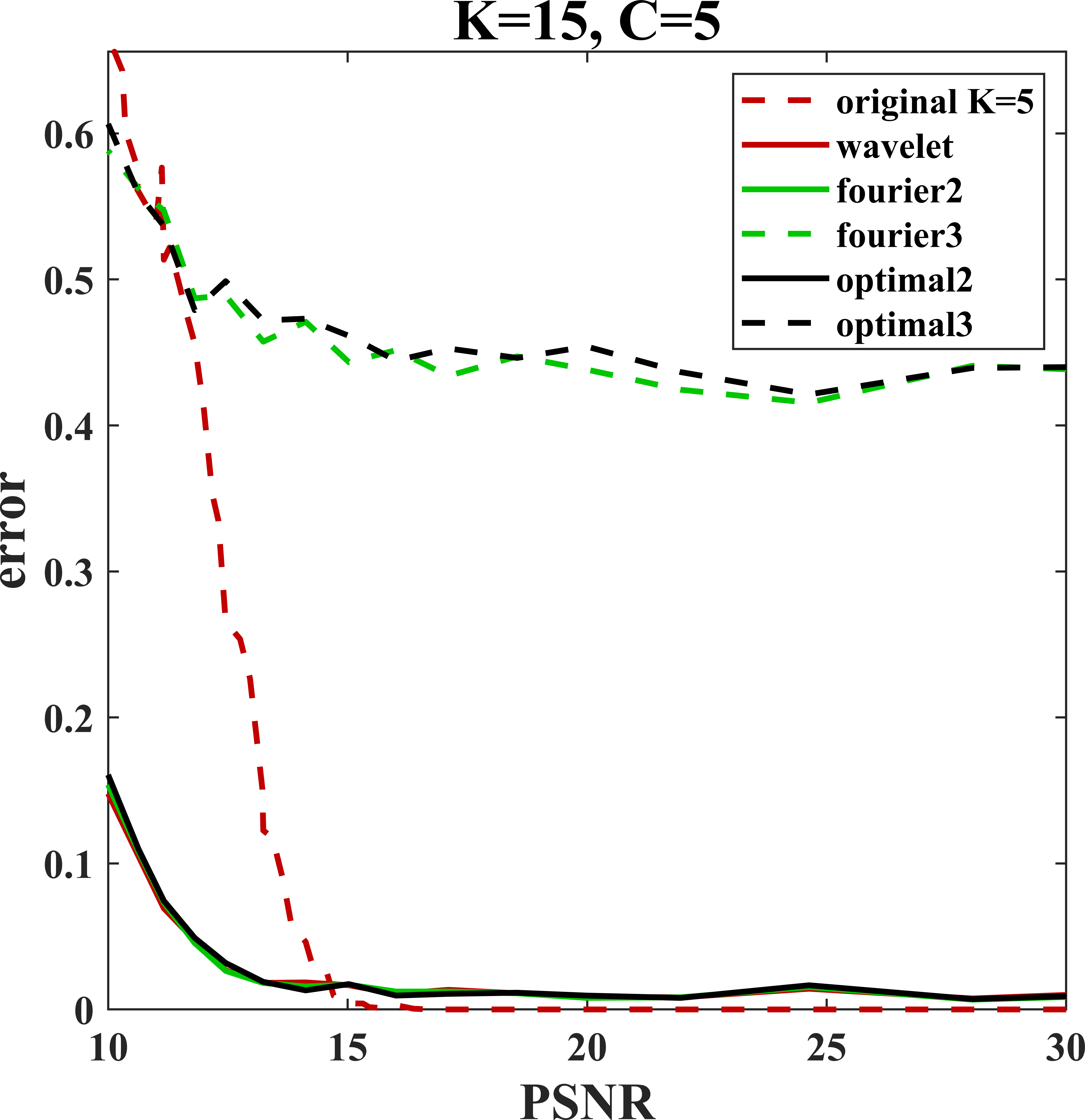}}\ 
		\subfloat[\label{fig:K15noisePlots_allFreq_slope27}]{\includegraphics[width=0.3\textwidth]{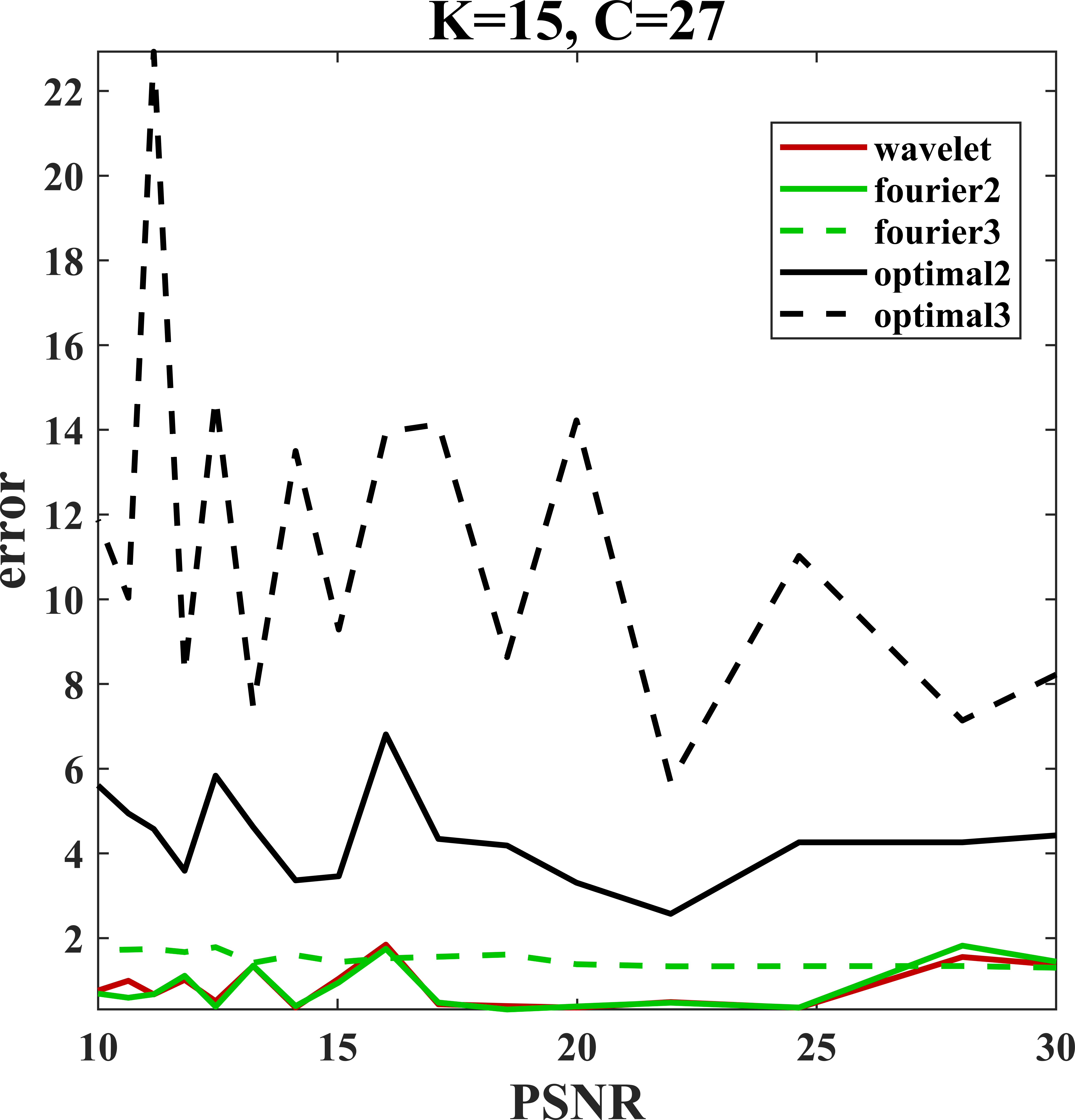}}\\
		\subfloat[\label{fig:K15noisePlots_highFreq_slope5}]{\includegraphics[width=0.3\textwidth]{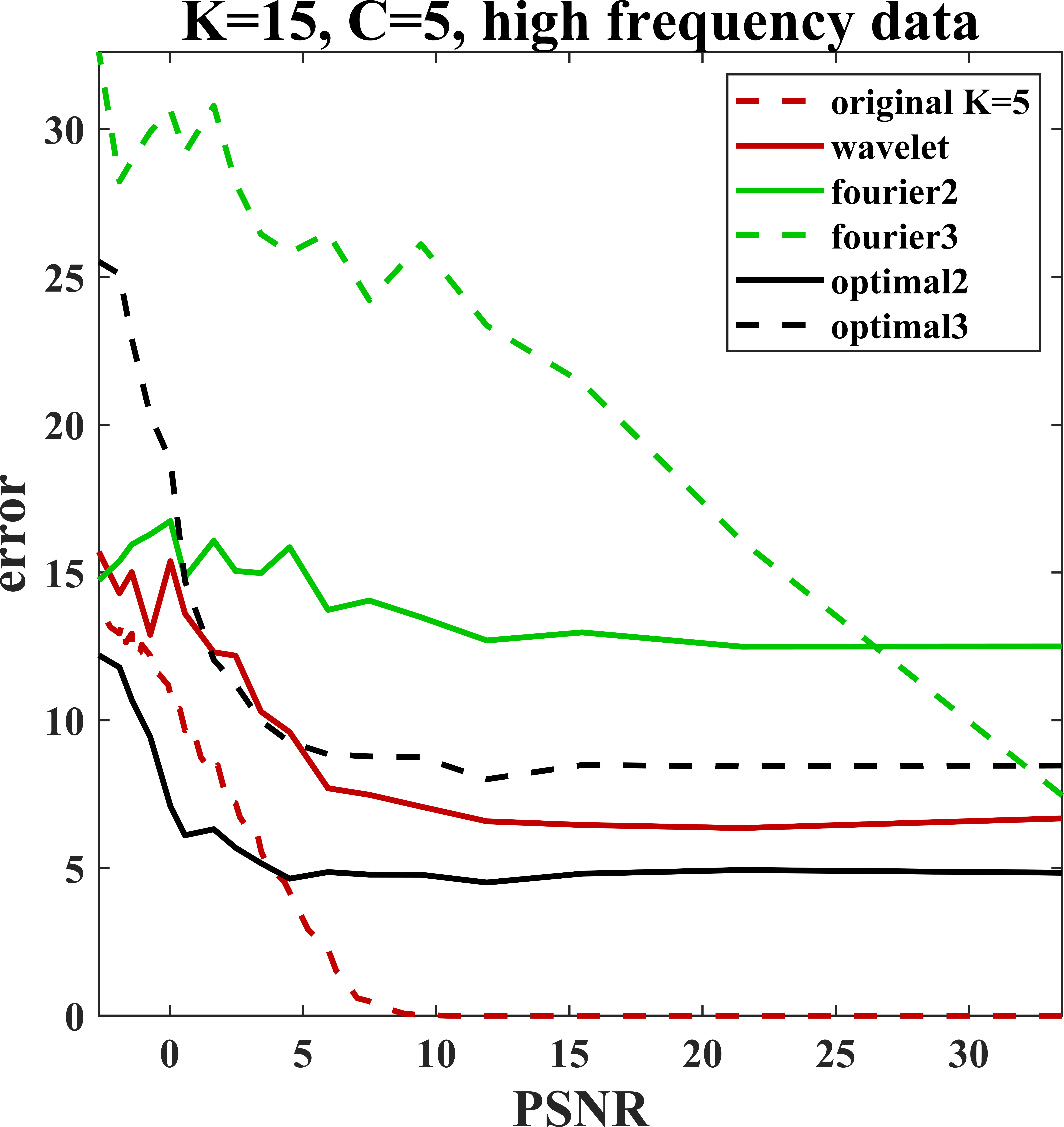}}\ 
		\subfloat[\label{fig:K15noisePlots_highFreq_slope27}]{\includegraphics[width=0.3\textwidth]{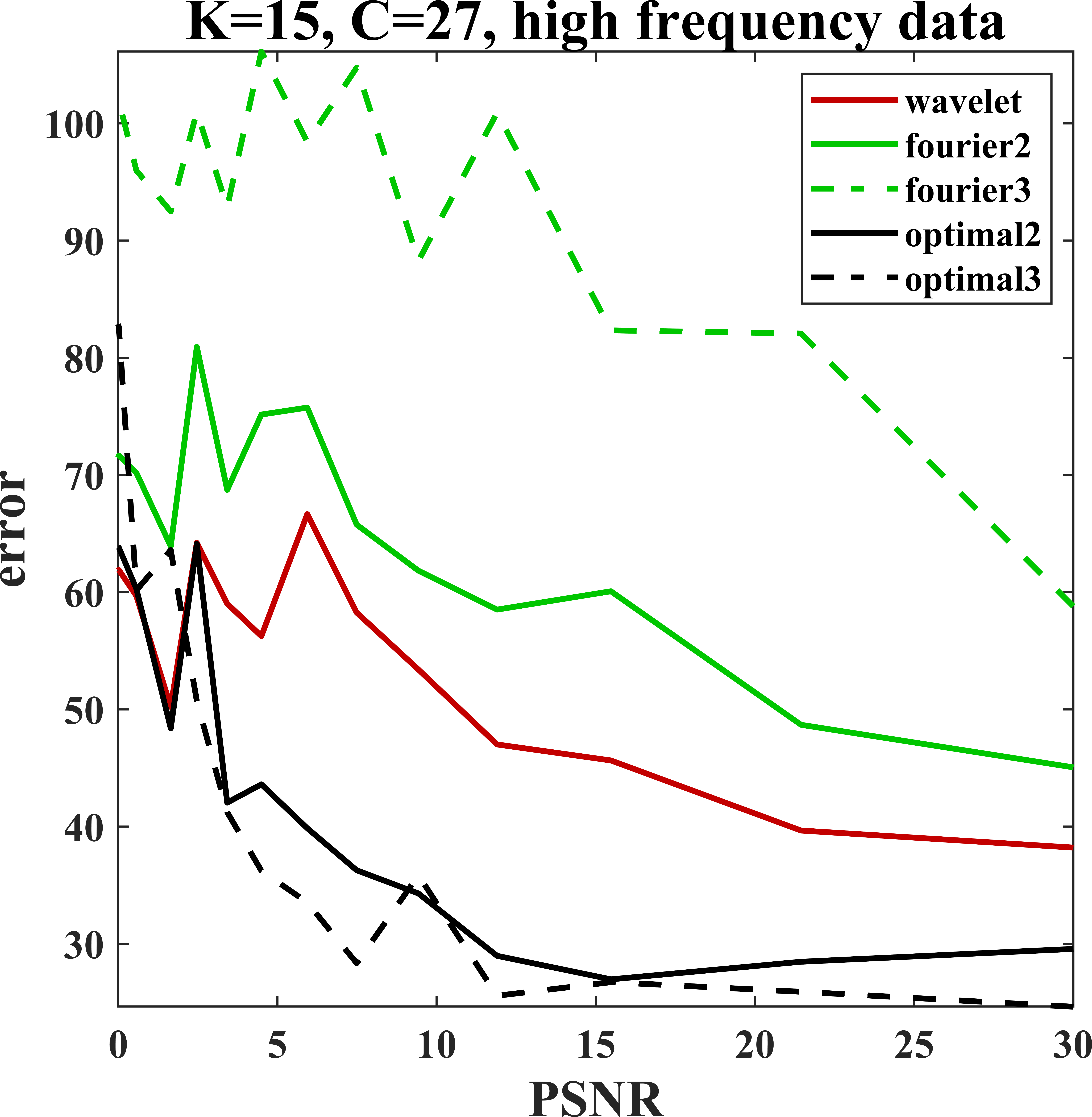}}
		\caption{Mean approximation error for $K=15$ and noisy data with a maximum shift difference of $C=5$ (left) or $C=27$ (right). In (c) and (d) only high frequency data was used, where the $160$ lowest frequencies are $0$.}
		\label{fig:K15noisePlots}
	\end{center}
\end{figure}

\subsection{Application data}

In our last two experiments we demonstrate fg-ORKA on data from two different applications. First, track seismic waves in geophysical data. Here, the upsampling strategy is used to get a finer result as with the original ORKA approach. In the second experiment, we try fg-ORKA on a soccer video. The video shows many fast moving objects of different sizes. Furthermore, the camera is not fixed but moving throughout the scene. Tracking single players or other objects of interest within this video is a hard task and we will use this example to demonstrate the current limitations of our technique.

The real geophysical data shown in Figure \ref{fig:seismic_org} shows a large seismic wave a the top of the image. We use the original ORKA algorithm and the new fg-ORKA to track this wave. The parameters used are $C=5$ and $\mu=100$. Following the results from the previous tests we use $J=5$ levels of upsampling and the Fourier based upsampling technique with $r=2$. For the original ORKA algorithm we set $K=8$, which is the largest value possible on the used machine, and for fg-ORKA we set $K=15$. With this setup both methods take about $10$ minutes to complete. The reconstructed movement $\lambda$ of the seismic wave is shown in Figure \ref{fig:seismic_ORKA} (original ORKA) and \ref{fig:seismic_fgORKA} (fg-ORKA). The black lines indicate the reconstructed movement, the original data is drawn in the background as a comparison. We can clearly see that fg-ORKA fits much better to the seismic structure and thus reconstructs a more accurate movement.
 
\begin{figure}
	\begin{center}
		\subfloat[\label{fig:seismic_org}]{\includegraphics[width=0.3\textwidth]{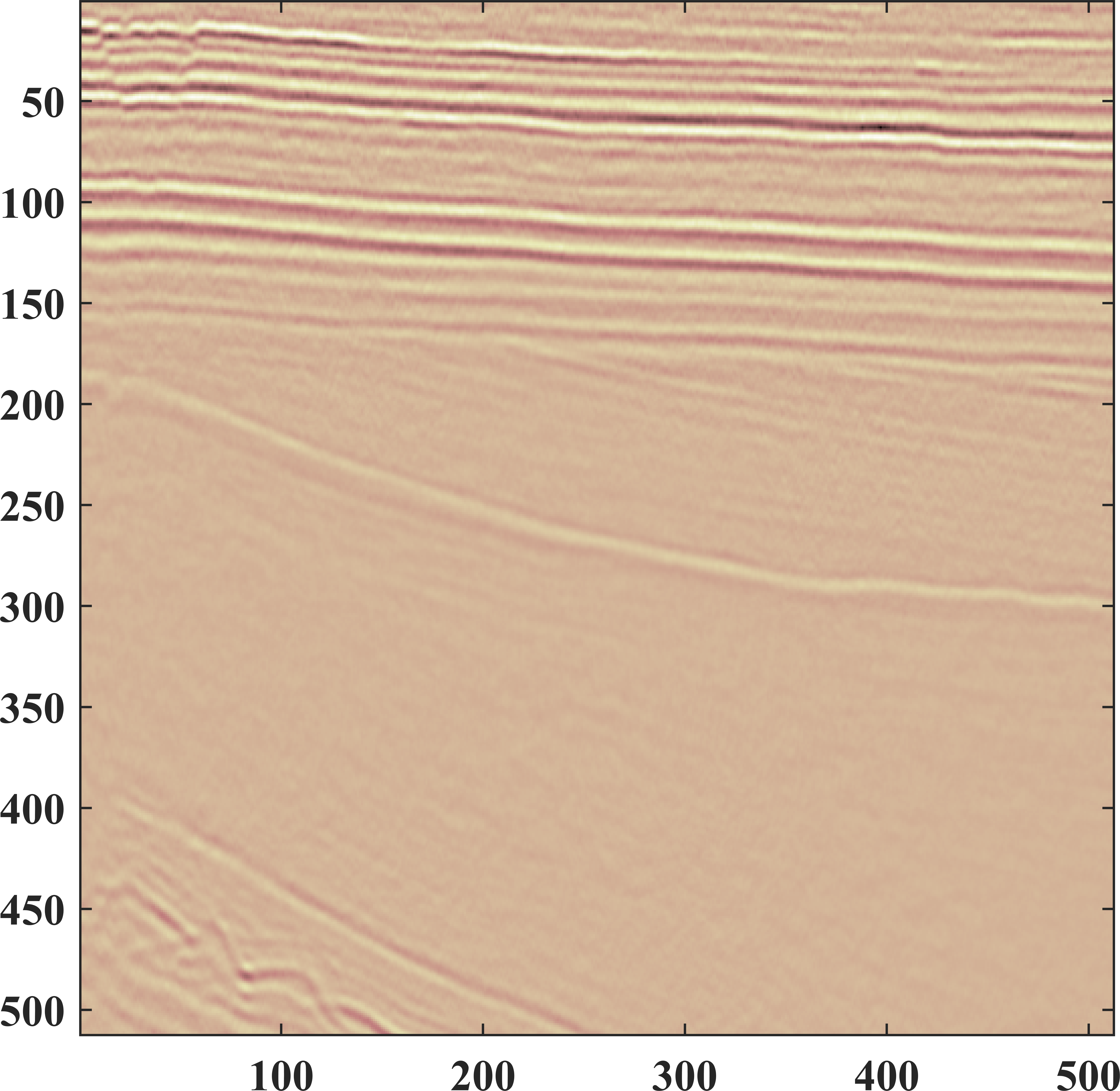}}\ 
		\subfloat[\label{fig:seismic_ORKA}]{\includegraphics[width=0.3\textwidth]{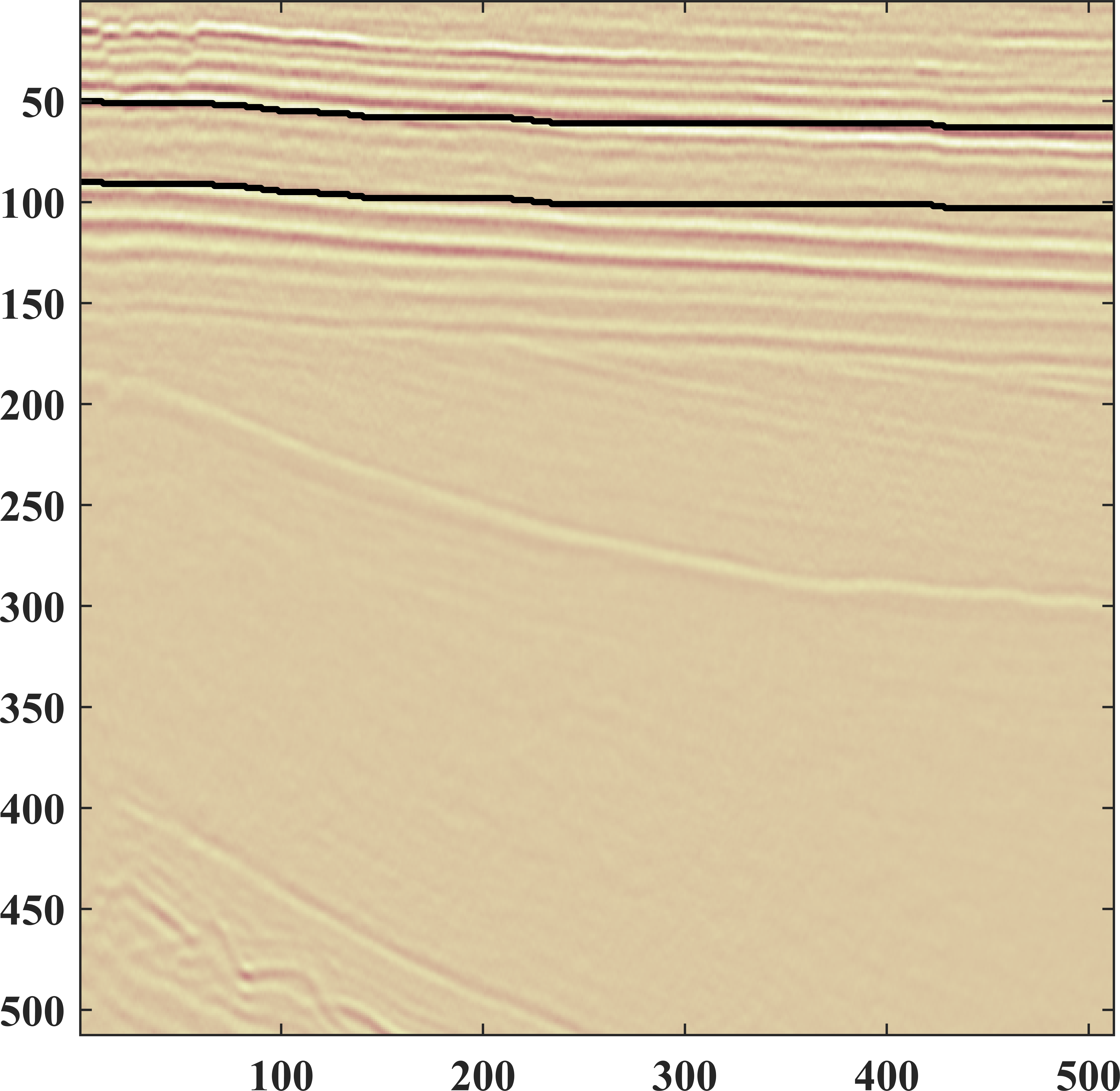}}\ 
		\subfloat[\label{fig:seismic_fgORKA}]{\includegraphics[width=0.3\textwidth]{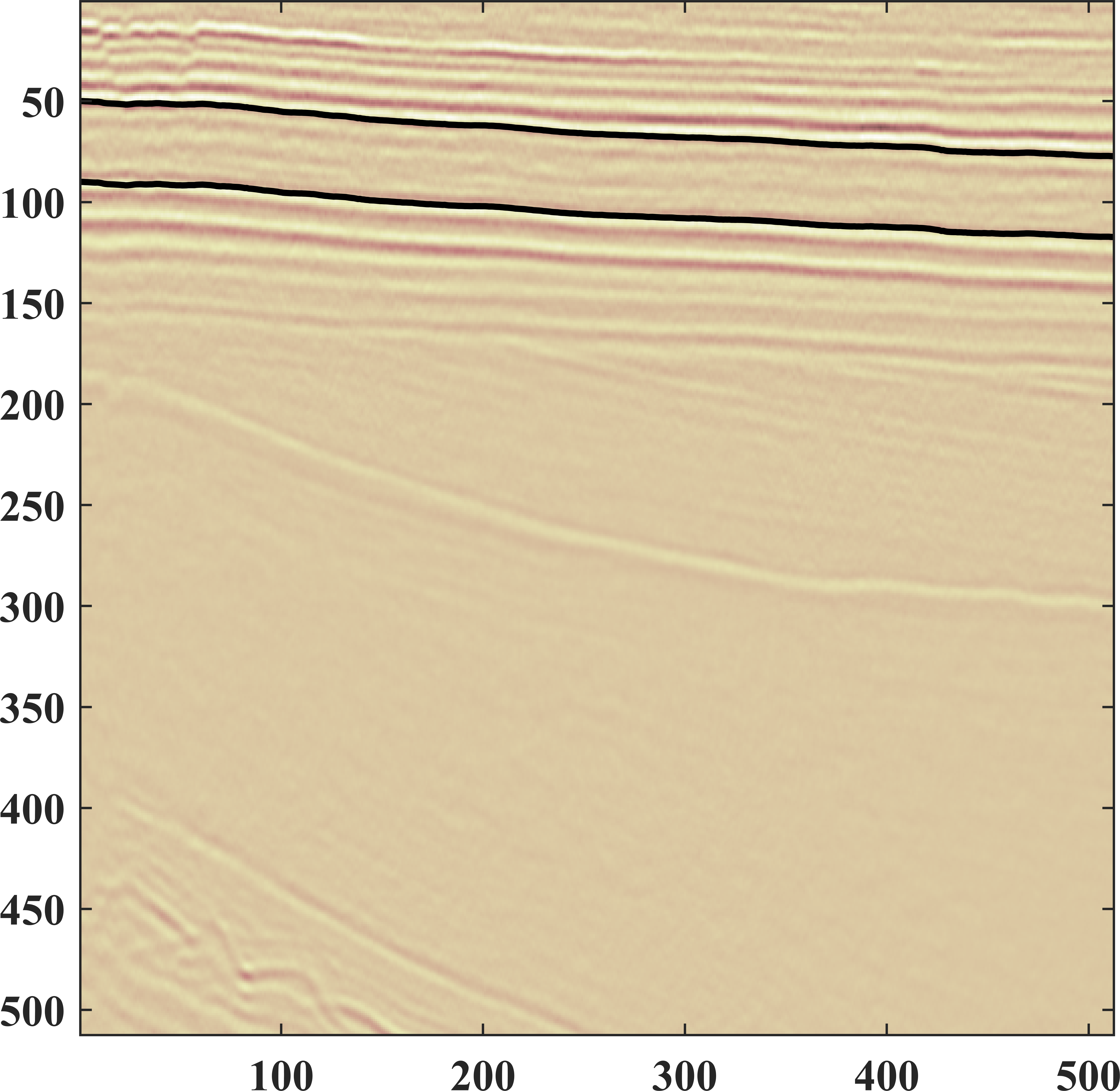}}
		\caption{Tracking a seismic event in the post-stack data (a) using the original ORKA (b) compared to fg-ORKA with $J=5$ upsampling steps (c).}
		\label{fig:seismic}
	\end{center}
\end{figure}

In our last experiment we compare the original ORKA algorithm to fg-ORKA on a soccer video from the "UCF Sports Action Data Set" from the UCFCenter of Research in Computer Vision \cite{Rodriguez08,Soomro14}. The video has a frame size of $576\times720$ pixels and has a total of $65$ frames. Figure \ref{fig:soccer} shows the first and last frame of the video to give the reader an impression of the scene: A recording of a soccer match showing multiple players, a referee and the ball. Throughout the scene the camera rotates in the right direction which especially changes the advertisements shown in the background. The video poses many challenges for our algorithm. First, the players and camera are moving with a fast speed which requires a large choice of $C$. Second, the players and especially the ball are small objects compared to the video size what makes them hard to track. Furthermore, in our algorithm we consider periodic shifts, i.e., any data that is shifted to the right/bottom out of the frame will appear again at the left/top of the frame. This is of cause not the case for videos but rather a restriction of our model so far. Another difference to our model is, that overlapping objects in the video do not add up their gray scale values, but rather the object in front covers the object in the background. Lastly, because we are dealing with multi-dimensional data now, the complexity of ORKA scales as $O((2C+1)^{2K})$ and $O(3^{2K})$ for fg-ORKA.

\begin{figure}
	\begin{center}
		\includegraphics[width=0.49\textwidth]{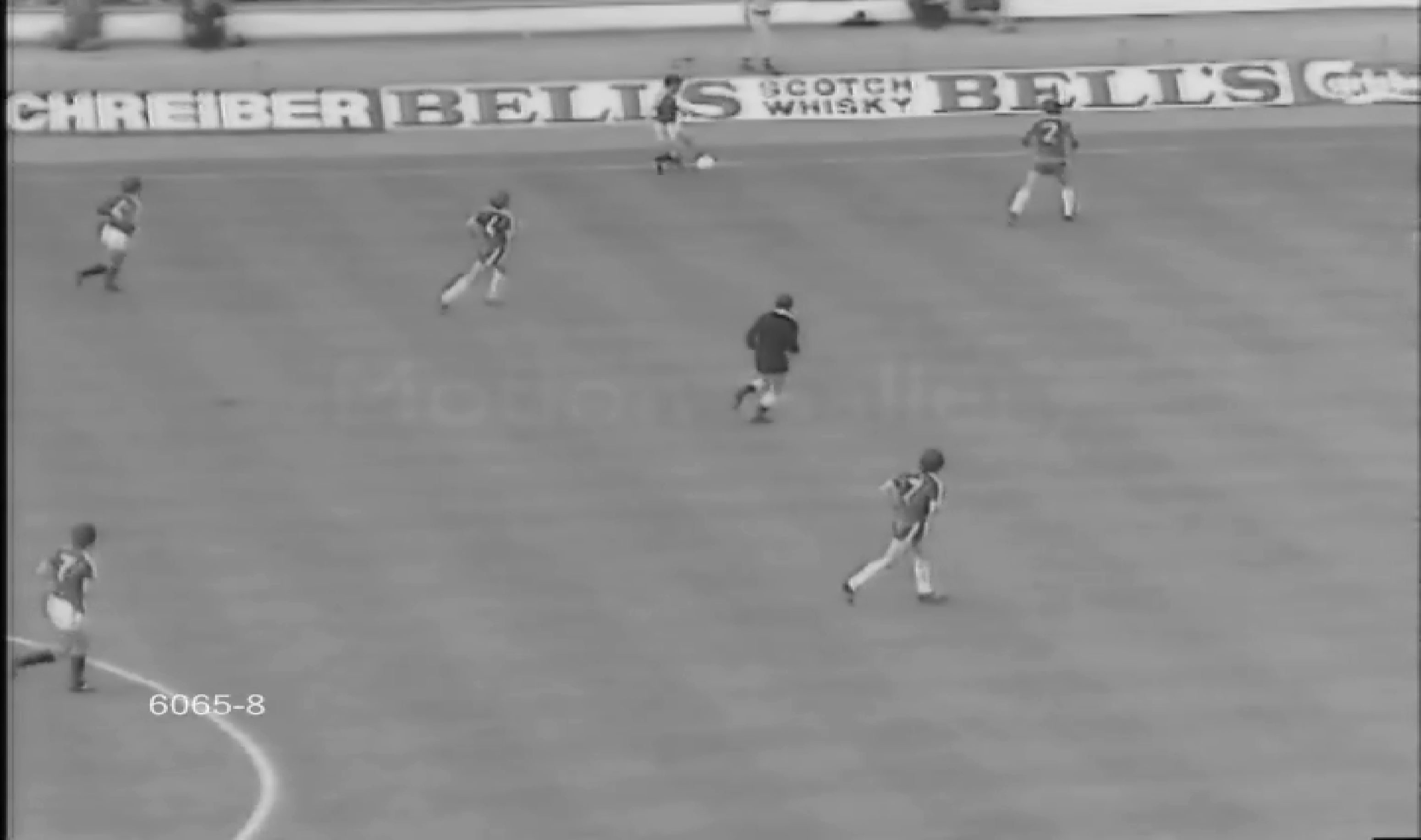}
		\includegraphics[width=0.49\textwidth]{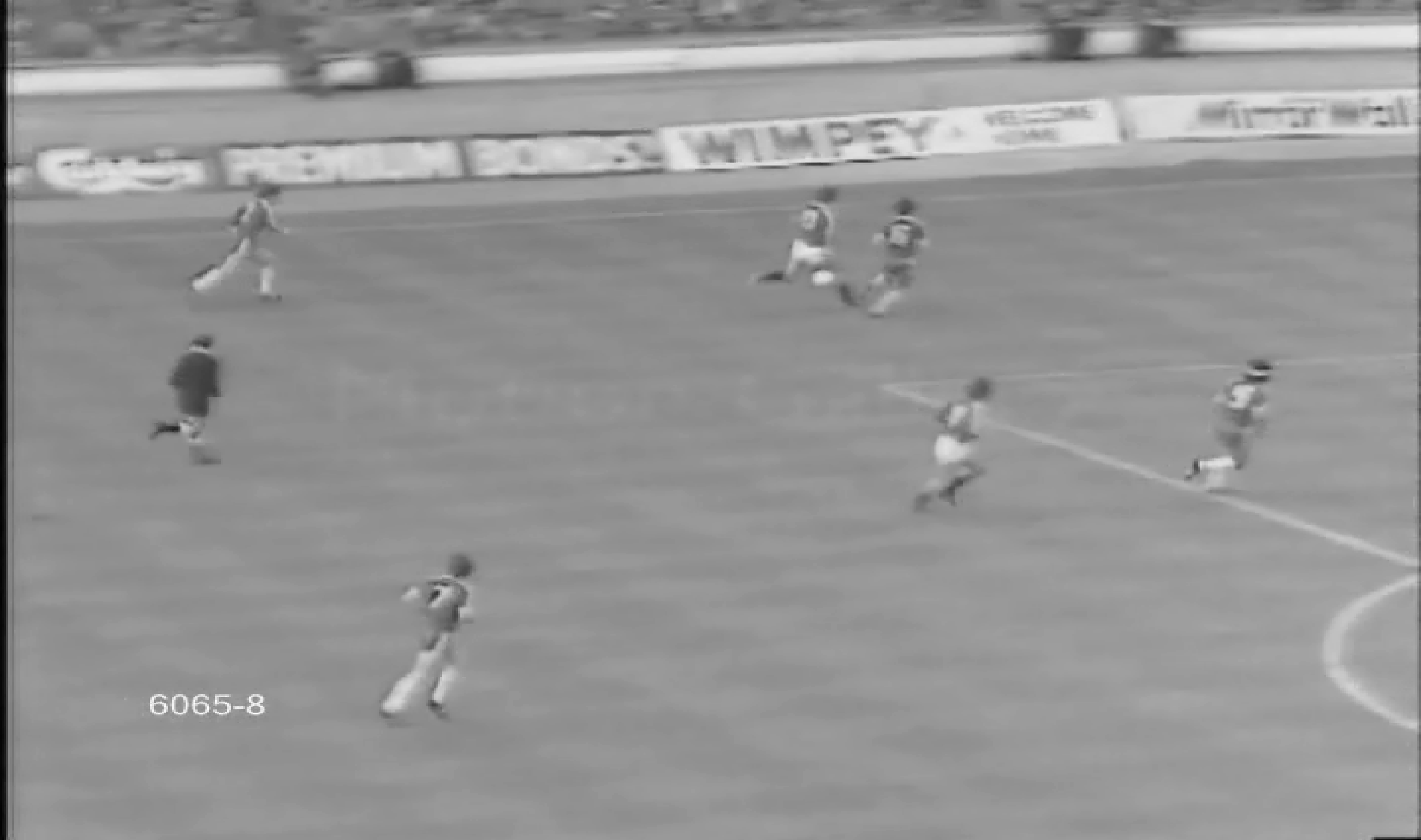}
		\caption{First and last frame of soccer video.}
		\label{fig:soccer}
	\end{center}
\end{figure}

In our test we iteratively reconstructed several objects from the video using the ORKA and the fg-ORKA algorithm. We choose a value of $C=15$ which is just large enough to allow $\lambda$ to keep track of the fast movements involved in the video. For the original ORKA algorithm we are only able to choose the approximation parameter $K=3$ before running out of memory. For fg-ORKA we set $K=9$ which is the largest parameter such that $(2C+1)^{2\cdot3}\geq3^{2\cdot K}$, i.e., the complexity of fg-ORKA with this choice is even lower as the original ORKA algorithm with $K=3$. We did not use upsampling in this experiment ($J=0$). This means, the results directly show the benefit of using a larger approximation parameter $K$. We set $\mu=500$ for all but the first and third iteration, in which we set $\mu=1.000.000$. We generally recommend using a large $\mu$ in the first iteration to filter out any global background or illumination effects. In the third iteration the algorithm switched from detecting large objects (background, advertisement board) to small objects (players, referee). Because of their small size, the players movement only changes a few pixels of the frame and is hard to distinguish from left-over noise of the larger objects. Hence, we choose a large $\mu$ to suppress most of the noise effects. Overall, ORKA and fg-ORKA both struggle if there is a big variance in size of the objects. The reconstruction could be improved by adding more restrictions on the object matrix $U$ itself, such as compact or connected support. However, this is beyond the scope of this work.

Figure \ref{fig:soccer_fgORKA} shows the first three objects recovered by the fg-ORKA algorithm: the general background, the advertisement boards, and the referee. The referee object shows artifacts of other players which move in approximately the same direction and speed as the referee itself. Hence, the algorithm is unable to completely distinguish these. We have also manually tracked the position of the advertisement board and the referee every ten frames and compared the tracked position with the actual position. (The advertisement board was tracked by its position of the "S" in the second "BELLS" which is visible in most of the video.) The positions tracked by ORKA and fg-ORKA compared to the actual position are shown in Figure \ref{fig:soccer_positions}. We note that the original ORKA algorithm performs better in tracking the advertisement board (Figure \ref{fig:obj2-x} and \ref{fig:obj2-y}). To track the correct position of the advertisement, the details such as the written text can be important. such information can get lost when downsampling the data. Furthermore, due to the periodic shift in our model, the algorithm actually expects the advertisement that leave the frame on the left, to appear on the right side again. This effect can even increase with a larger parameter $K$, as we are comparing more frames with one another. For this reason, the full ORKA algorithm with a smaller parameter $K$ is actually beneficial here. However, the referee is visible throughout the entire video. Moreover, his dark jersey is a feature easily recognized by the algorithm even on lower resolutions. Hence, fg-ORKA performs much better in tracking the position of the referee.

\begin{figure}
	\begin{center}
	\includegraphics[width=0.3\textwidth]{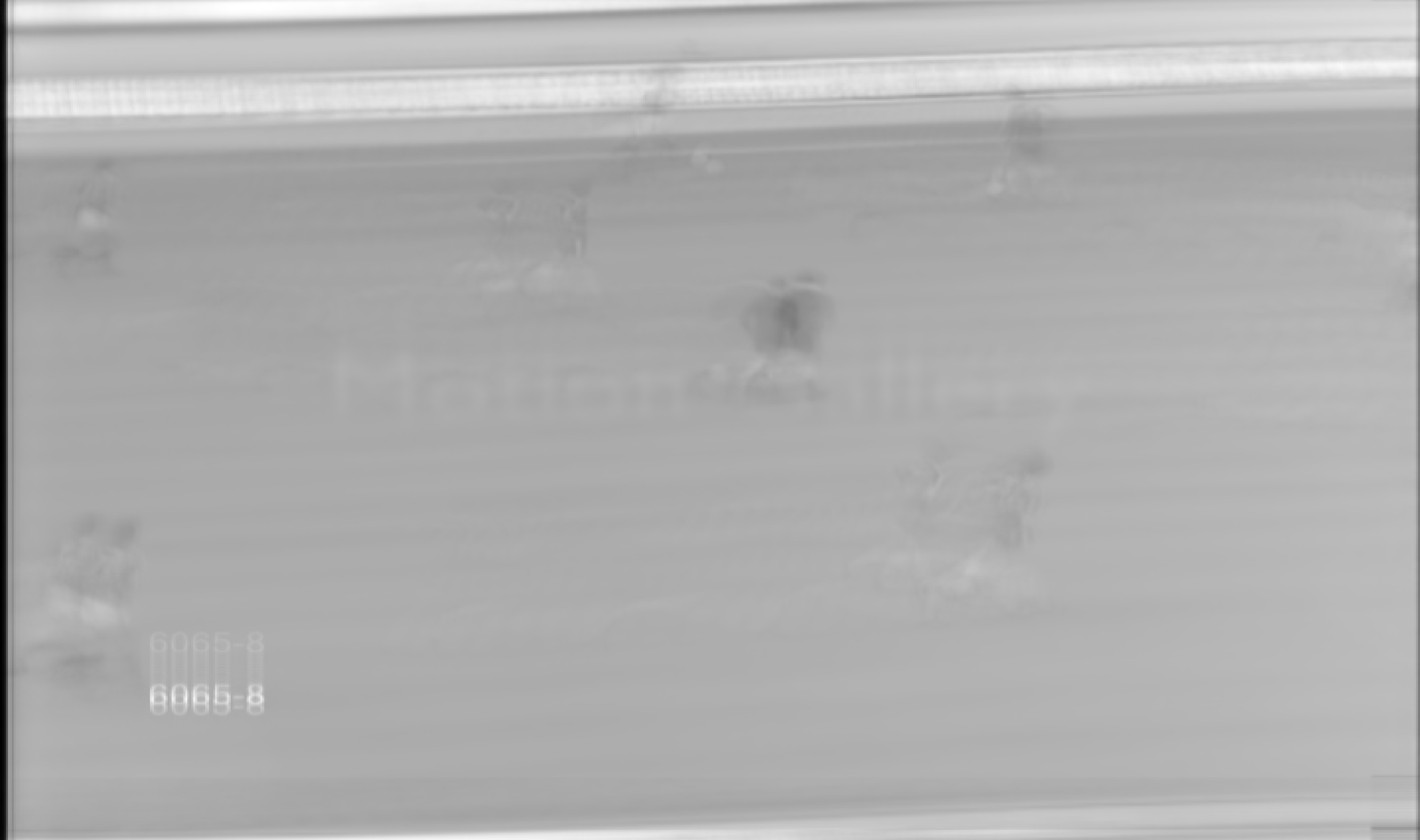}
	\includegraphics[width=0.3\textwidth]{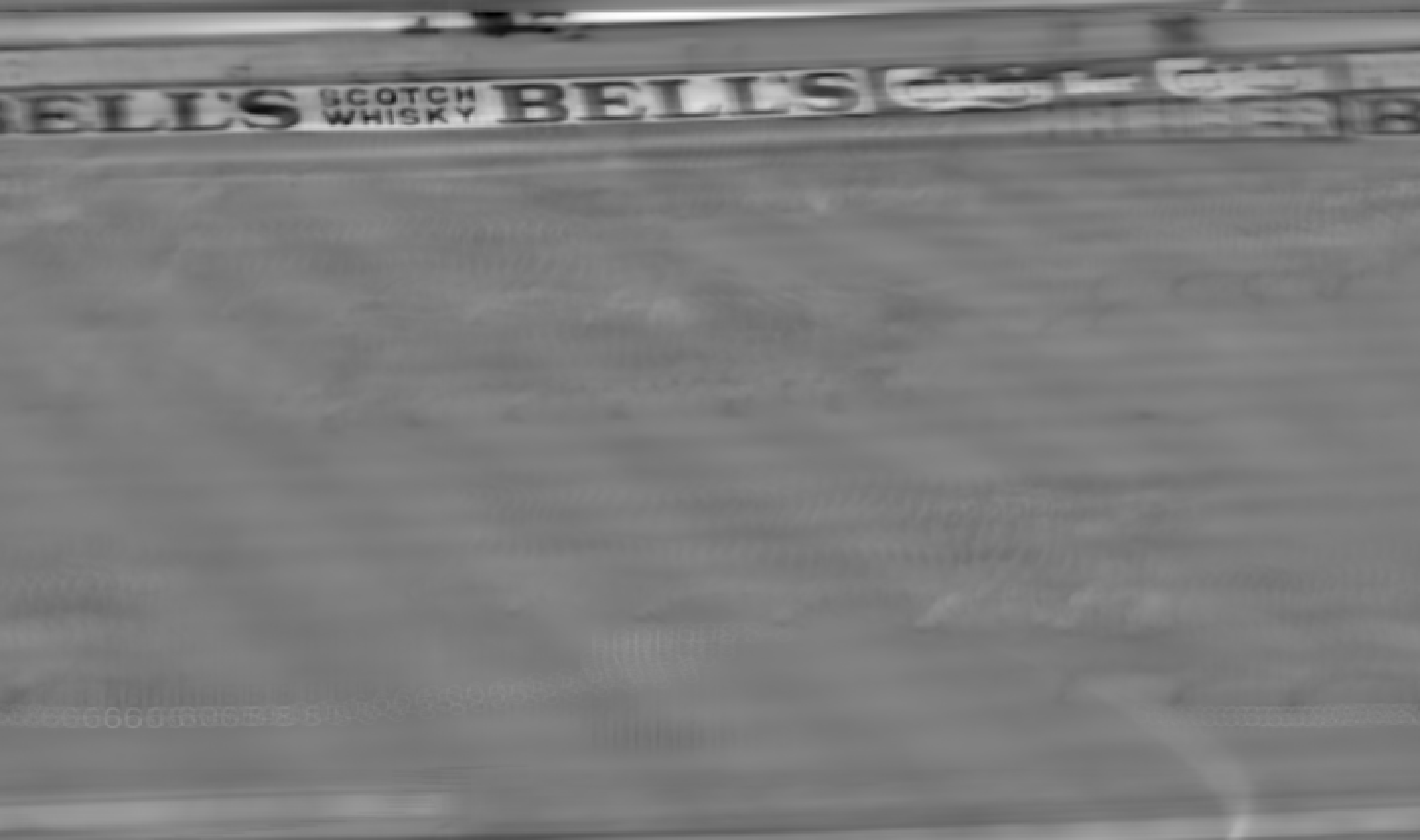}
	\includegraphics[width=0.3\textwidth]{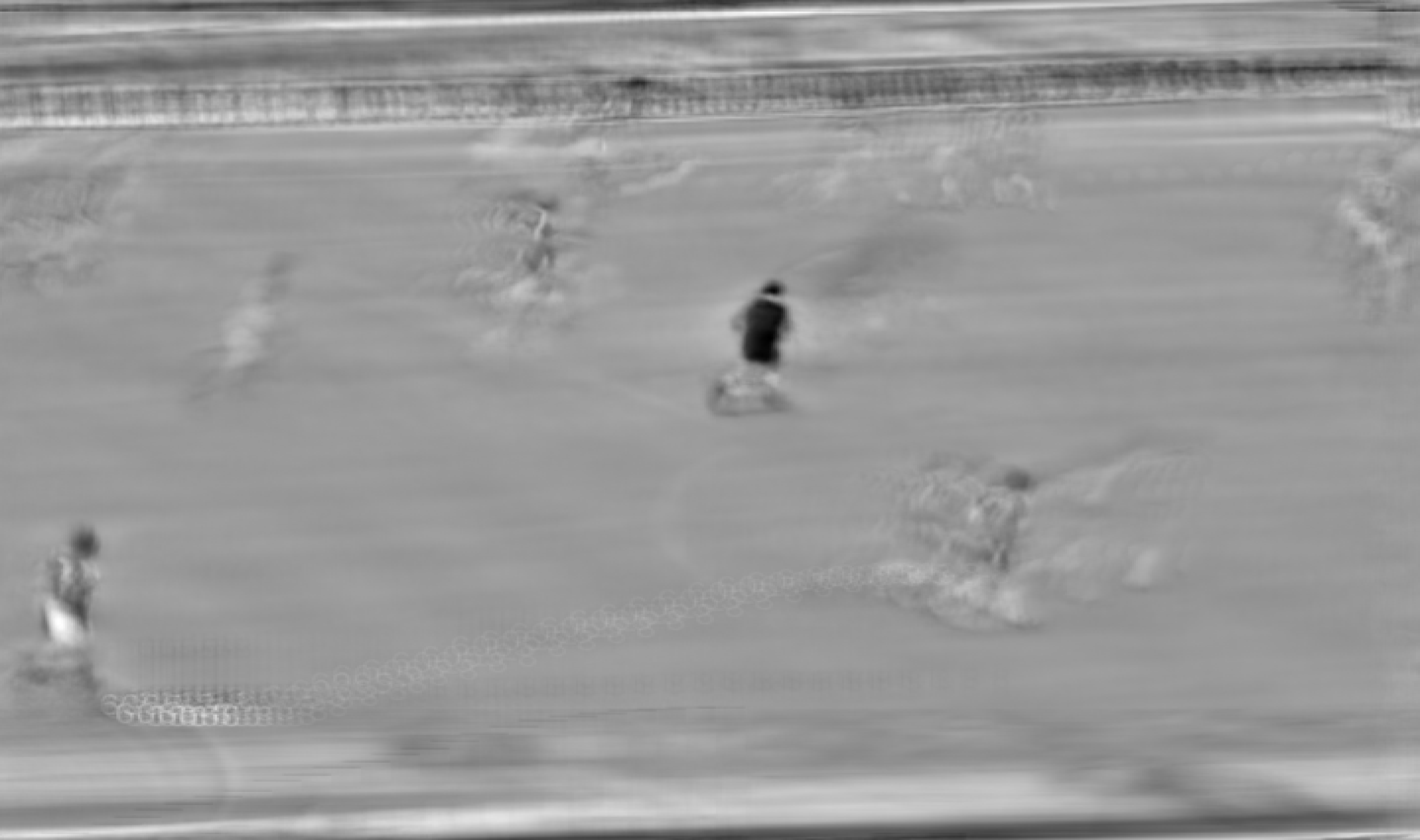}
	\caption{Reconstructed objects by the fg-ORKA algorithm: background, advertisement boards, referee.}
	\label{fig:soccer_fgORKA}
	\end{center}
\end{figure}

\begin{figure}
	\begin{center}
	\subfloat[\label{fig:obj2-x}]{\includegraphics[width=0.3\textwidth]{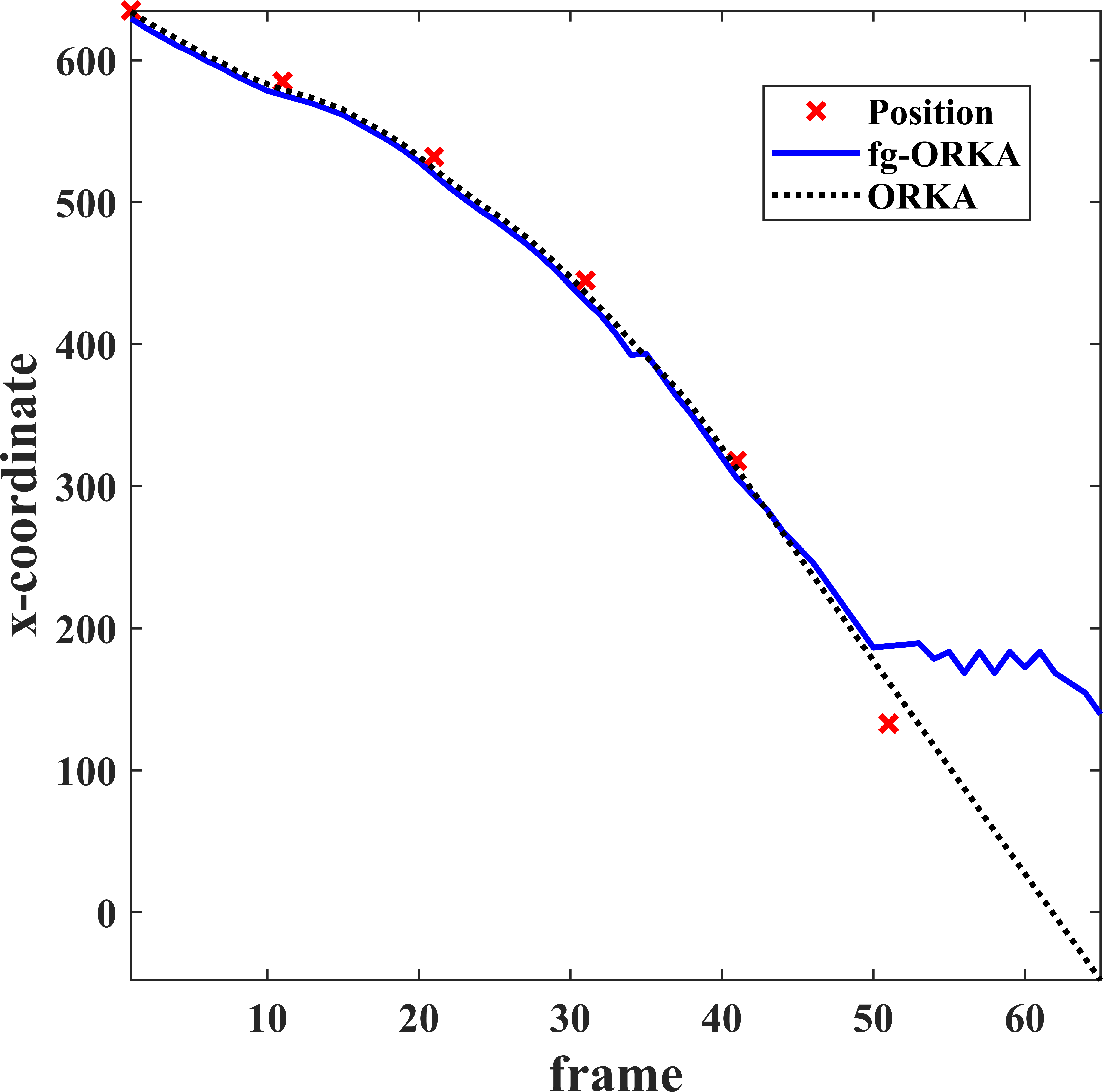}}
	\subfloat[\label{fig:obj2-y}]{\includegraphics[width=0.3\textwidth]{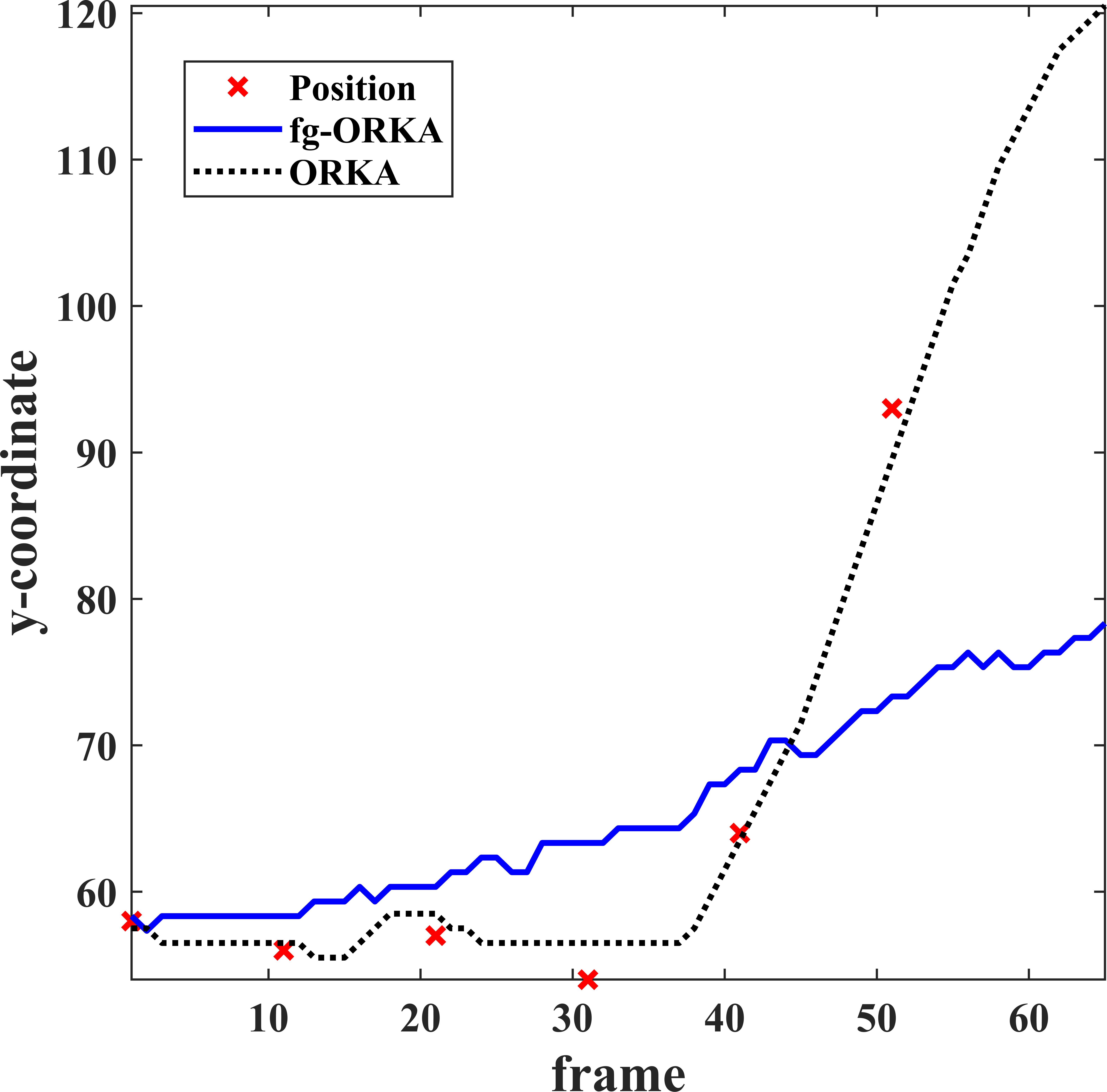}}\\
	\subfloat[\label{fig:obj3-x}]{\includegraphics[width=0.3\textwidth]{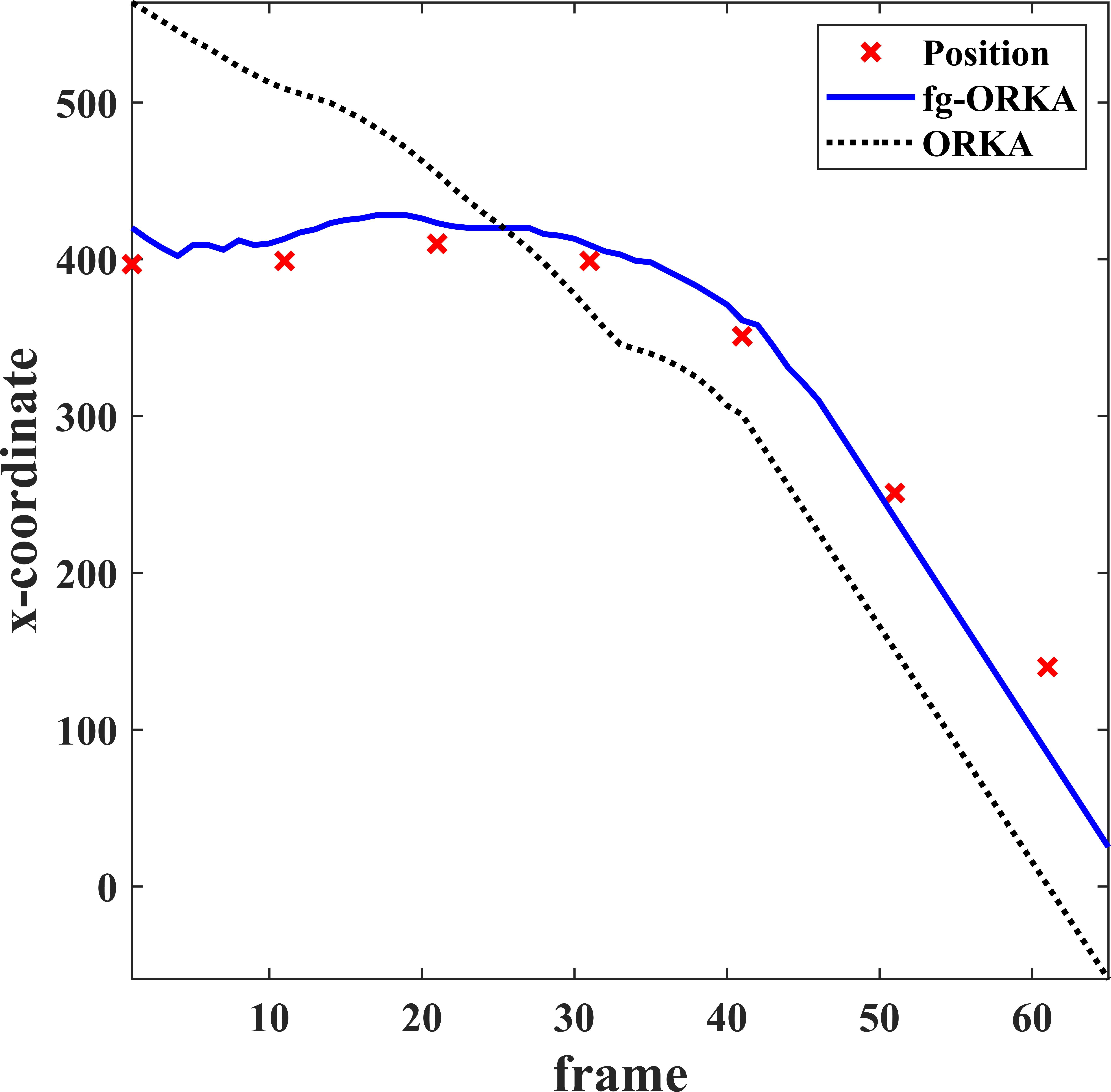}}
	\subfloat[\label{fig:obj3-y}]{\includegraphics[width=0.3\textwidth]{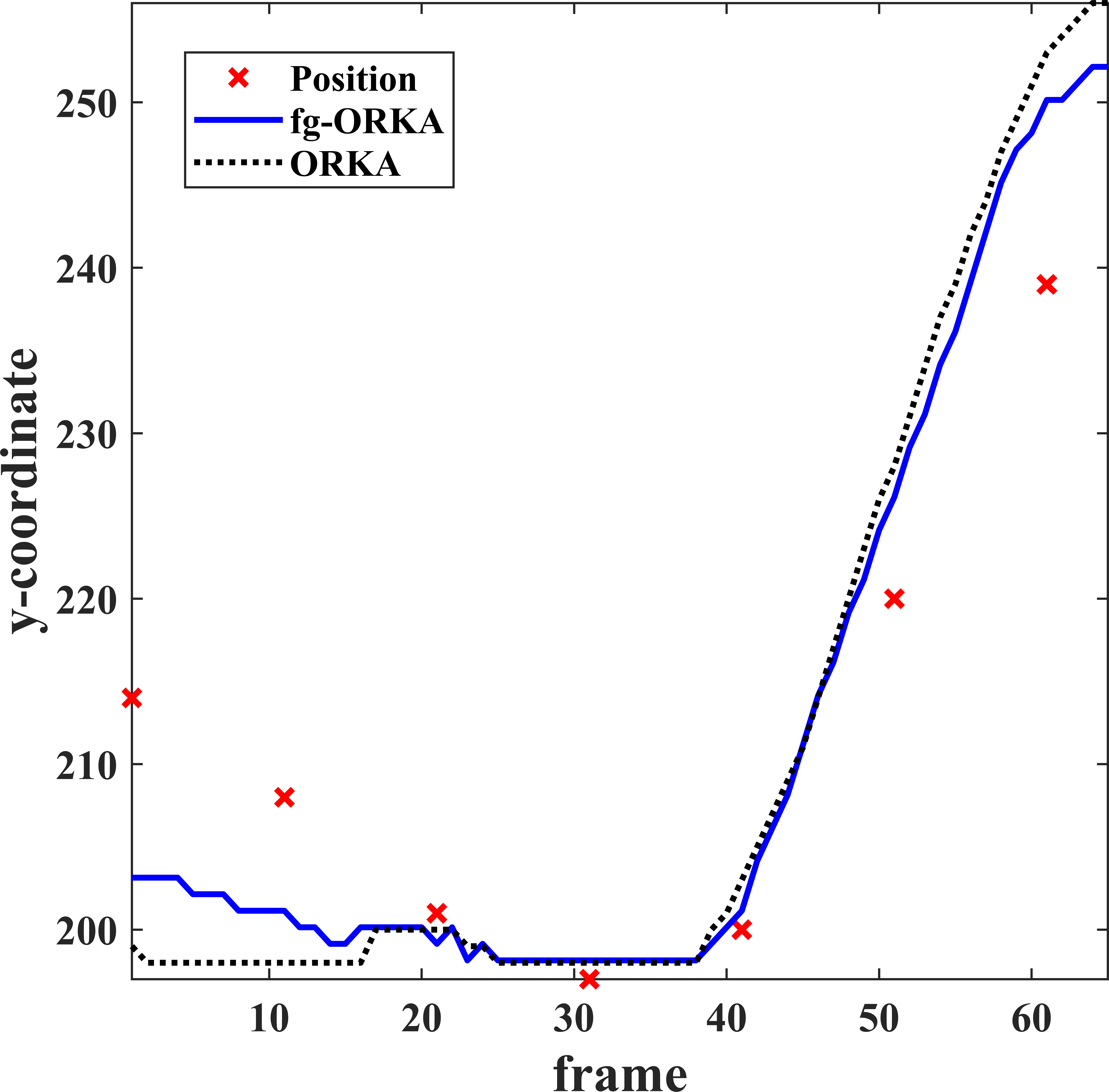}}
	\caption{Position of tracked object: Advertisement board (top) and referee (bottom).}
	\label{fig:soccer_positions}
	\end{center}
\end{figure}

\section{Conclusion}

We introduced a new iterative version of the ORKA algorithm. It significantly reduces the complexity and runtime compared to the original algorithm.  The method downsamples given data into a shift invariant subspace and uses the low resolution version to obtain a low resolution version of the object movement. This is then used as first approximation for the original resolution. Hence, we only need to calculate an update step whose complexity no longer scales with the parameter $C$. Depending on the estimated object speed, the downsampling step is applied iteratively. Furthermore, we are also able to track the movement much more accurate by artificially upsampling the data. We introduced three possible resampling strategies based on Wavelet transform, Fourier transform, and an error minimizing downsampling. The strategies can use different resampling rates where we identified a resampling rate of $r=2$ as the most stable and $r=3$ as the most efficient one.

A thorough complexity and error analysis of the new method was presented. The complexity of the new approach only scales $O(3^K)$ instead of $O((2C+1)^K)$ which makes it independent of the parameter $C$. The error analysis showed that the approximation error can be limited as long as the given data is sufficiently smooth. the theoretical results have been confirmed in different experimental setups. Furthermore, we have demonstrated the algorithm on two different applications with real data.

The experiment performed on the soccer video demonstrated the current limitations of the proposed technique. It has trouble dealing with fast moving objects or objects that significantly vary in size. Furthermore, it identifies objects solely depending on their movements which leads to combined reconstructions whenever two or more objects have approximately the same movement. The reconstruction can surely be improved by enforcing more restrictions on the object, such as compact or connected support. Using a non-periodic shift operator is another interesting adjustment for future applications.\\

\bibliographystyle{unsrt}
\bibliography{references}

\end{document}